\g@addto@macro\bfseries{\boldmath}
\DeclarePairedDelimiter{\abs}{\lvert}{\rvert}
\declaretheorem[name=Theorem,numberwithin=section]{theorem}
\declaretheorem[name=Proposition,sibling=theorem,refname={proposition,propositions}]{proposition}
\declaretheorem[name=Lemma,sibling=theorem]{lemma}
\declaretheorem[name=Corollary,sibling=theorem]{cor}
\declaretheorem[name=Definition,sibling=theorem,style=definition]{definition}
\declaretheorem[name=Remark,sibling=theorem,style=definition]{remark}
\declaretheorem[name=Example,sibling=theorem,style=definition]{example}
\title{Constructive Arithmetics in Ore Localizations of Domains}
\author[RWTH]{Johannes Hoffmann}
\ead{Johannes.Hoffmann@math.rwth-aachen.de}
\author[RWTH]{Viktor Levandovskyy}
\ead{Viktor.Levandovskyy@math.rwth-aachen.de}
\address[RWTH]{Lehrstuhl D f\"ur Mathematik, RWTH Aachen University}
\begin{document}

\begin{abstract}
For a non-commutative domain $R$ and a multiplicatively closed set $S$ the (left) Ore localization of $R$ at $S$ exists if and only if $S$ satisfies the (left) Ore property.
Since the concept has been introduced by Ore back in the 1930's, Ore localizations have been widely used in theory and in applications.
We investigate the arithmetics of the localized ring $S^{-1}R$ from both theoretical and practical points of view.
We show that the key component of the arithmetics is the computation of the intersection of a left ideal with a submonoid $S$ of $R$.
It is not known yet, whether there exists an algorithmic solution of this problem in general.
Still, we provide such solutions for cases where $S$ is equipped with additional structure by distilling three most frequently occurring types of Ore sets.
We introduce the notion of the (left) saturation closure and prove that it is a canonical form for (left) Ore sets in $R$. 
We provide an implementation of arithmetics over the ubiquitous $G$-algebras in \textsc{Singular:Plural} and discuss questions arising in this context.
Numerous examples illustrate the effectiveness of the proposed approach. 
\end{abstract}

\maketitle

\setcounter{tocdepth}{2}
\tableofcontents

\section*{Introduction}

In the beginning of the 1930's {\O}ystein Ore introduced several algebraic concepts \cite{ore31, ore33}, which have seriously influenced the development of algebra and its
applications.
One of them, an Ore extension of a ring, proved to be a very useful generalization of the construction of commutative rings.
Another one is Ore localization, which is utilized very widely from ring theory to algebras of operators, arising in algebraic analysis and algebraic combinatorics.
The very formalism of Ore localization was theoretically constructive in its appearance.
While computations with finitely presented algebras form a part of computer algebra, traditionally assisted by (non-commutative) Gr\"obner bases, localization in general allows us to recognize the structure of objects in a variety of non finitely presented algebras.
The latter has been intensively used in algebraic geometry and commutative algebra, also accompanied by algorithms and implementations from the 1980's, see e.g. \cite{GPS08}. 
It is natural to apply the same philosophy to non-commutative rings, and with this paper we present our investigations for domains.
The major task, which we call our \textbf{Ore Dream}, consists in the following: provide procedures and, ideally, algorithms and computer-assisted tools for manipulating left or right fractions in an Ore localization of a domain with respect to a (multiplicatively closed) Ore set $S$.

We investigate this task in detail for a domain $R$ and identify a key problem for algorithmic computations: the intersection of a left ideal with a submonoid $S$ of $R$.
To the best of our knowledge no algorithmic solution to this problem exists if only the monoid structure of $S$ is taken into account.
We propose to specify a \emph{type} of an Ore set according to the presence of additional algebraic structure and address three common types which appear most frequently in applications.
For each of these we provide a solution to the key problem and discuss the occuring limitations.

The need for Gr\"obner bases over domains and, in particular, of elimination and syzygies inspired the restriction of the rings under consideration to the broad class of ubiquitous $G$-algebras (cf. Section \ref{sect_G-algebras}).

Historically, perhaps the first connection between the arithmetic operations in the quotient field (which is an example of Ore localization) of a Noetherian domain $R$ and syzygies over $R$ was the paper \cite{AL88} by Apel and Lassner.
They have analyzed the case where $R$ is a universal enveloping algebra of a finitely dimensional Lie algebra.
Notably, the extension of these results to the whole class of PBW algebras was completed in \cite{BGV}.

We analyze the approach to arithmetics of fractions in an Ore localization from the point of view of computability.
Moreover, we present an implementation {\tt olga.lib} in the computer algebra system {\sc Singular:Plural} \cite{Plural}.
To the best of our knowledge, apart from {\tt olga.lib} and {\sc JAS} (\cite{JAS}), which performs similar computations even over {\em parametric solvable polynomial rings} \cite{Kredel2015}, no other package can offer constructive computations on that level of generality.
However, the price we pay for this is high: generally, Gr\"obner bases over related rings are invoked for manipulations with fractions both in {\sc Plural} and in {\sc JAS}.

There are several packages for computer algebra systems dealing with similar situations, most notably {\sc OreTools} \cite{OreTools} and {\sc OreAlgebra} \cite{Mgfun} in {\sc MAPLE}, {\sc ore$\_$algebra} \cite{OreAlgebraSage} in {\sc SAGE}, and {\sc HolonomicFunctions} \cite{HoloFun} in {\sc MATHEMATICA}.
These work over predefined algebras, such as univariate algebras of operators (differential, difference and $q$-difference among most prominent ones, cf. \cite{Mgfun}) over a field of rational functions as coefficient domain (these rings are also Ore localizations).
In such situations, as investigated e.g. in \cite{Grig90, vdH16}, one can even estimate the complexity of operations.
In contrast our development serves a general purpose; in the future one could develop specialized better algorithms for new algebras and/or their Ore subsets.

This paper is an extended, expanded and enhanced version of the paper \cite{JHL17}, which appeared at ISSAC 2017 in Kaiserslautern, Germany.
Proofs have been either restored from the abridged version or expanded in details.
A new part on the simplifying procedure for fractions has been added.
We enhanced presented examples and added a new \Cref{case_study_Weyl} devoted to a lively case study.
We updated the exposition with recent results and publications in the area.
In the meantime our implementation \texttt{olga.lib} also has been significantly improved.

\section{Basics of left Ore localization}

In this section we recall the classical material based on Ore's original paper \cite{ore31} following a modern exposition inspired by \cite{BGV}:

\begin{definition}
	Let $R$ be a domain.
	A subset $S$ of $R$ is called \emph{multiplicatively closed} if $1\in S$, $0\notin S$ and for all $s,t\in S$ we have $s\cdot t\in S$.
	Furthermore, $S$ is called a \emph{left Ore set} if it is multiplicatively closed and satisfies the \emph{left Ore condition}: for all $s\in S$ and $r\in R$ there exist $\tilde{s}\in S$ and $\tilde{r}\in R$ such that $\tilde{s}r=\tilde{r}s$.
\end{definition}

Any subset $B$ of $R\setminus\{0\}$ has a minimal multiplicatively closed superset $[B]$, which consists of all finite products of elements of $B$, where the empty product represents $1$.

\begin{definition}
	Let $S$ be a multiplicatively closed subset of a domain $R$.
	A ring $R_S$ together with an injective homomorphism $\varphi:R\rightarrow R_S$ is called a \emph{left Ore localization} of $R$ at $S$ if:
	\begin{enumerate}
		\item
			For all $s\in S$, $\varphi(s)$ is a unit in $R_S$.
		\item
			For all $x\in R_S$ there exist $s\in S$ and $r\in R$ such that $x=\varphi(s)^{-1}\varphi(r)$.
	\end{enumerate}
\end{definition}

One can show that the Ore localization of $R$ at $S$ exists if and only if $S$ is a left Ore set in $R$.
In this case, the localization is unique up to isomorphism.
The classical construction is given by the following:

\begin{theorem}\label{construction_of_Ore_localization}
	Let $S$ be a left Ore set in a domain $R$.
	The relation $\sim$ on $S\times R$, given by
	\[
		(s_1,r_1)\sim(s_2,r_2)
		\Leftrightarrow
		\exists\tilde{s}\in S~\exists\tilde{r}\in R:
		\tilde{s}s_2=\tilde{r}s_1\text{ and }\tilde{s}r_2=\tilde{r}r_1,
	\]
	is an equivalence relation.
	Now $S^{-1}R:=((S\times R)/\sim,+,\cdot)$ becomes a ring via
	\[
		(s_1,r_1)+(s_2,r_2)
		:=(\tilde{s}s_1,\tilde{s}r_1+\tilde{r}r_2),
	\]
	where $\tilde{s}\in S$ and $\tilde{r}\in R$ satisfy $\tilde{s}s_1=\tilde{r}s_2$, and
	\[
		(s_1,r_1)\cdot(s_2,r_2)
		:=(\tilde{s}s_1,\tilde{r}r_2),
	\]
	where $\tilde{s}\in S$ and $\tilde{r}\in R$ satisfy $\tilde{r}s_2=\tilde{s}r_1$.
	Together with the injective \emph{structural homomorphism}
	\[
		\rho_{S,R}:R\rightarrow S^{-1}R,\quad
		r\mapsto(1,r),
	\]
	$(S^{-1}R,\rho_{S,R})$ is the left Ore localization of $R$ at $S$.
\end{theorem}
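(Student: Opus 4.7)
The plan is to verify the five separate claims of the theorem in the natural order in which they build on each other: (i) $\sim$ is an equivalence relation, (ii) the addition and multiplication are well defined on equivalence classes, (iii) the ring axioms hold, (iv) $\rho_{S,R}$ is an injective ring homomorphism, and (v) the resulting pair satisfies both defining properties of a left Ore localization.

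For (i), reflexivity and symmetry are immediate from the definition (take $\tilde s=\tilde r=1$ and swap the roles, respectively, noting that the symmetric form of the witnesses can be extracted from the original ones via the Ore condition). For transitivity, given $(s_1,r_1)\sim(s_2,r_2)$ with witnesses $\tilde s_{12},\tilde r_{12}$ and $(s_2,r_2)\sim(s_3,r_3)$ with witnesses $\tilde s_{23},\tilde r_{23}$, I would apply the left Ore condition to $\tilde s_{23}\in S$ and $\tilde r_{12}\in R$ (after first aligning the middle component $s_2$) to find $u\in S,v\in R$ with $u\tilde r_{12}=v\tilde s_{23}$, and then verify that $u\tilde s_{12}\in S$ and $v\tilde r_{23}\in R$ serve as witnesses for $(s_1,r_1)\sim(s_3,r_3)$. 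This is the first place where one genuinely uses that $R$ is a domain (to cancel $s_i$ after building chained identities).

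For (ii), I would first observe that the existence of the auxiliary $\tilde s,\tilde r$ in the definition of $+$ and $\cdot$ is exactly the left Ore condition applied to $(s_2,s_1)$ and to $(s_2,r_1)$, respectively. The main obstacle — and by far the most bookkeeping-heavy part — is independence from all choices made: (a) independence from the specific Ore data $(\tilde s,\tilde r)$; and (b) independence from the choice of representatives in each equivalence class. In each case the strategy is the same: given two candidate outcomes, repeatedly apply the left Ore condition to produce a common left multiplier that equates the two candidates, then use that $R$ is a domain to cancel and extract the witness pair certifying that the two outcomes define the same class. I would handle $+$ first and then $\cdot$, and I would organize the case analysis so that the verification for one representative at a time suffices, reducing the work roughly by half.

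For (iii), once well-definedness is established, all ring axioms reduce to straightforward manipulations in the preimage $S\times R$ after suitable common-denominator reductions obtained from the Ore condition; the zero is $(1,0)$, the one is $(1,1)$, and the additive inverse of $(s,r)$ is $(s,-r)$. I would only write the nontrivial cases (associativity of multiplication and the two distributive laws) carefully, since commutativity issues in $R$ force one to pick the multipliers in a specific order.

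For (iv) and (v): $\rho_{S,R}(r)=(1,r)$ is clearly additive and multiplicative (both defining $\tilde s=\tilde r=1$), and injectivity uses that $(1,r)\sim(1,0)$ forces $\tilde s r=0$ with $\tilde s\in S$, hence $r=0$ since $R$ is a domain. Finally, for each $s\in S$ the class $(s,1)$ is a two-sided inverse of $\rho_{S,R}(s)=(1,s)$, giving property (1), and every class $(s,r)$ equals $\rho_{S,R}(s)^{-1}\rho_{S,R}(r)$, giving property (2). The main obstacle throughout the argument is the well-definedness step in (ii); everything else is either a direct computation or a direct invocation of the Ore condition plus cancellation in the domain $R$.
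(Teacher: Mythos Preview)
The paper does not give its own proof of this theorem: it is presented in Section~1 as classical material ``based on Ore's original paper'' and ``following a modern exposition inspired by \cite{BGV}'', with no argument supplied. So there is nothing to compare your approach against; your outline is essentially the standard textbook route (verify $\sim$ is an equivalence relation, then well-definedness of the operations, then the ring axioms, then the localization properties), and it is correct in structure.

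One small caution: your treatment of symmetry in (i) is too brisk. You cannot simply ``swap the roles'' of $\tilde s$ and $\tilde r$, since $\tilde r$ need not lie in $S$. The honest argument first applies the Ore condition to the pair $(s_2,s_1)$ to obtain $c\in S$, $d\in R$ with $cs_1=ds_2$, then uses a second Ore step together with cancellation in the domain to force $cr_1=dr_2$ as well; only then do $(c,d)$ serve as witnesses for the reversed relation. This is exactly the kind of ``Ore plus cancellation'' manoeuvre you correctly flag for transitivity and for well-definedness, so you should flag it here too.
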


The elements of $S^{-1}R$ are called \emph{left fractions} and are denoted by $s^{-1}r$ or, by abuse of notation, again by $(s,r)$.
Some basic facts concerning the localization are collected below:

\begin{lemma}
	Let $S$ be a left Ore set in a domain $R$ and $(s,r)\in S^{-1}R$.
	\begin{enumerate}[(a)]
		\item 
			$0_{S^{-1}R}=(1_R,0_R)$ and $1_{S^{-1}R}=(1_R,1_R)$.
		\item 
			$(s,r)=1$ if and only if $s=r$.
		\item 
			$(s,r)=0$ if and only if $r=0$.
		\item 
			Let $t\in R$.
			If $ts\in S$, then $(s,r)=(ts,tr)$.
		\item 
			$-(s,r)=(s,-r)$.
		\item 
			$S^{-1}R$ is a domain.
	\end{enumerate}
\end{lemma}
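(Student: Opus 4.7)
The plan is to read each of the six claims off the construction in \Cref{construction_of_Ore_localization}, using the equivalence relation $\sim$, the explicit addition and multiplication formulas, and the standing assumption that $R$ is a domain. The common engine is part (c), which characterises the zero fraction; once that is available, the additive inverse in (e) and the absence of zero divisors in (f) become one-line applications.

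First I would handle (a) by plugging the candidates into the formulas and choosing the witnesses $\tilde s=1$ and $\tilde r=1$, both of which are legal because $1\in S$, to see $(s,r)+(1,0)=(s,r)$ and $(s,r)\cdot(1,1)=(s,r)$. For (c), unfolding $(s,r)\sim(1,0)$ yields $\tilde s\in S$ and $\tilde r\in R$ with $\tilde s=\tilde r s$ and $\tilde r r=0$; since $\tilde s\neq 0$ and $R$ is a domain, necessarily $\tilde r\neq 0$ and hence $r=0$. The converse is immediate via $\tilde s=s,\tilde r=1$. Part (b) follows the same template: from $(s,r)\sim(1,1)$ one obtains $\tilde s=\tilde r s=\tilde r r$, so $\tilde r(s-r)=0$ with $\tilde r\neq 0$ forces $s=r$.

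Part (d) is a direct check that $\tilde s=1\in S$ and $\tilde r=t$ witness the relation $(s,r)\sim(ts,tr)$, the hypothesis $ts\in S$ being used precisely to ensure that $(ts,tr)$ is a legitimate pair in $S\times R$. For (e) I would apply the addition formula with $\tilde s=\tilde r=1$, which is legal since $1\cdot s=1\cdot s$, to obtain $(s,r)+(s,-r)=(s,0)$; by (c) this equals $0$, so $(s,-r)$ is the additive inverse.

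The genuinely substantive step is (f). Suppose $(s_1,r_1)\cdot(s_2,r_2)=0$. The left Ore property supplies $\tilde s\in S$ and $\tilde r\in R$ with $\tilde r s_2=\tilde s r_1$, and the product is $(\tilde s s_1,\tilde r r_2)$; by (c) this vanishes iff $\tilde r r_2=0$. Because $R$ is a domain, either $r_2=0$, and then $(s_2,r_2)=0$ by (c), or $\tilde r=0$, in which case $\tilde s r_1=0$ with $\tilde s\in S\subseteq R\setminus\{0\}$ forces $r_1=0$, so $(s_1,r_1)=0$ by (c) again. The only subtlety is that the Ore witnesses $\tilde s,\tilde r$ are not canonical; one should note that the conclusion is independent of this choice because $\sim$ is an equivalence relation, so whether the product fraction is zero is a property of its class, not of a representative. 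I do not expect any genuine obstacle — the domain hypothesis on $R$ is exactly what makes all the cancellations in (b), (c), and (f) legitimate.
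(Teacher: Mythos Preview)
The paper does not actually prove this lemma; it is presented in the preliminary section as a collection of ``basic facts'' recalled from the classical theory, with no argument given. Your verification from the construction in \Cref{construction_of_Ore_localization} is the natural one and is correct in all substantive respects.

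One small inaccuracy to fix in (a): the single choice $\tilde s=1$, $\tilde r=1$ does not simultaneously witness both identities. For $(s,r)+(1,0)$ the addition formula requires $\tilde s\, s=\tilde r\cdot 1$, so with $\tilde s=1$ you need $\tilde r=s$, giving $(s,\,r+s\cdot 0)=(s,r)$. For $(s,r)\cdot(1,1)$ the multiplication formula requires $\tilde r\cdot 1=\tilde s\, r$, so with $\tilde s=1$ you need $\tilde r=r$, giving $(s,r)$. You should also check the left identity $(1,1)\cdot(s,r)=(s,r)$ separately (here $\tilde s=s$, $\tilde r=1$ works), since you have not yet established commutativity of the product. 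None of this affects the logic of the remaining parts, which are clean; in particular your handling of~(f), including the remark that the vanishing of the product is a property of the equivalence class and hence independent of the chosen Ore witnesses, is exactly right.
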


According to the previous lemma, additive inverses of left fractions are quite easy to find, but what about multiplicative inverses?
If $r\in S$, then the inverse of $(s,r)$ is given by $(r,s)$.
But there might be other invertible left fractions whose numerators do not belong to $S$, a phenomenon that even occurs in commutative localizations:

\begin{example}
	Consider the localization $K[x]_{x^2}:=[x^2]^{-1}K[x]$, then $x\notin[x^2]$, but $(1,x)$ is invertible with inverse $(x^2,x)$.
\end{example}

We turn to the theory of left saturation closure to find a complete description of the invertible elements of the localization.

\section{A brief introduction to the left saturation closure}

From this point on we present new results unless stated otherwise.

In this section let $R$ be a domain.

\begin{definition}
	A subset $S$ of $R$ with $0\notin S$ is called \emph{left (resp. right) saturated}, if for all $a,b\in R$, $a\cdot b\in S$ implies $b\in S$ (resp. $a\in S$).
	Furthermore, $S$ is called \emph{saturated} if it is both left and right saturated.
\end{definition}

While the notion of multiplicative closure only depends on the multiplication and is unchanged under embedding $R$ into a larger ring, being saturated involves factorization and thus heavily depends on the context: the set $S:=\mathbb{Z}\setminus\{0\}$ is both multiplicatively closed and saturated in $\mathbb{Z}$.
In $\mathbb{Q}$ it is still multiplicatively closed, but no longer saturated, since $2\cdot\frac{1}{2}=1\in S$, but $\frac{1}{2}\notin S$.

\begin{definition}
	Let $S$ be a multiplicatively closed subset of $R$.
	The \emph{left saturation closure} of $S$ is
	\[
		\operatorname{LSat}(S)
		:=\{r\in R\mid\exists w\in R:wr\in S\}.
	\]
\end{definition}

Since $1\in S\subseteq R$ we have $S\subseteq\operatorname{LSat}(S)$, in particular $1\in\operatorname{LSat}(S)$.
Furthermore, $0\notin\operatorname{LSat}(S)$ since $0\notin S$.

The following lemma justifies the name ``left saturation closure''.


\begin{lemma}\label{LSat_is_left_saturation_closure}
	Let $S$ be a multiplicatively closed subset of $R$.
	\begin{enumerate}[(a)]
		\item
			$\operatorname{LSat}(S)$ is left saturated.
		\item
			$S$ is left saturated if and only if $S=\operatorname{LSat}(S)$.
		\item
			$\operatorname{LSat}(S)$ is the smallest left saturated superset of $S$ with respect to inclusion.
	\end{enumerate}
\end{lemma}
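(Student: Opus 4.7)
The plan is to handle the three parts in order, exploiting the fact that each successive claim builds on the preceding one. All three statements reduce to straightforward unwrapping of the definition of $\operatorname{LSat}(S)$ and of left saturation, so the main work is just bookkeeping; there is no real obstacle.

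For part (a), I would take arbitrary $a,b\in R$ with $ab\in\operatorname{LSat}(S)$ and aim to show $b\in\operatorname{LSat}(S)$. By definition of $\operatorname{LSat}(S)$, there exists $w\in R$ with $w(ab)\in S$. Associativity gives $(wa)b\in S$, so the witness $w':=wa\in R$ certifies $b\in\operatorname{LSat}(S)$. This is the core of the argument and everything else essentially falls out.

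For part (b), one direction is immediate from (a): if $S=\operatorname{LSat}(S)$, then $S$ is left saturated. For the converse, assume $S$ is left saturated. We always have $S\subseteq\operatorname{LSat}(S)$ (take $w=1$ in the defining condition, noting $1\in S$), so it remains to show $\operatorname{LSat}(S)\subseteq S$. Given $r\in\operatorname{LSat}(S)$, pick $w\in R$ with $wr\in S$; left saturation of $S$ applied to the product $w\cdot r\in S$ forces $r\in S$.

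For part (c), let $T$ be any left saturated superset of $S$ in $R$. Given $r\in\operatorname{LSat}(S)$, choose $w\in R$ with $wr\in S\subseteq T$. Since $T$ is left saturated, $wr\in T$ forces $r\in T$. Hence $\operatorname{LSat}(S)\subseteq T$, and combined with $S\subseteq\operatorname{LSat}(S)$ together with (a) (which guarantees $\operatorname{LSat}(S)$ is itself a valid candidate), this shows $\operatorname{LSat}(S)$ is the minimum such set.
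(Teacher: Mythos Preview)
Your proof is correct and follows essentially the same approach as the paper's own proof: each part is handled by directly unwrapping the definitions of $\operatorname{LSat}(S)$ and left saturation, with the same witnesses and the same logical structure. The only cosmetic difference is in part (c), where the paper phrases minimality by taking $Q$ with $S\subseteq Q\subseteq\operatorname{LSat}(S)$ and showing $Q=\operatorname{LSat}(S)$, whereas you show $\operatorname{LSat}(S)\subseteq T$ for an arbitrary left saturated superset $T$; these are equivalent formulations of the same argument.
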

\begin{proof}
	\begin{enumerate}[(a)]
		\item
			Let $a,b\in R$ such that $ab\in\operatorname{LSat}(S)$, then there exists $w\in R$ such that $wab\in S$, thus $b\in\operatorname{LSat}(S)$.
		\item
			Let $r\in\operatorname{LSat}(S)$, then there exists $w\in R$ such that $wr\in S$.
			If $S$ is left saturated this implies $r\in S$, thus $S=\operatorname{LSat}(S)$.\\
			For the reverse implication note that $\operatorname{LSat}(S)$ is left saturated by the previous result.
			If $S=\operatorname{LSat}(S)$, then so is $S$.
		\item
			Let $Q\subseteq R$ be a left saturated set such that $S\subseteq Q\subseteq\operatorname{LSat}(S)$.
			Let $r\in\operatorname{LSat}(S)$, then there exists $w\in R$ such that $wr\in S\subseteq Q$.
			Since $Q$ is left saturated we have $r\in Q$ and thus $Q=\operatorname{LSat}(S)$.
	\end{enumerate}
\end{proof}

Denote the set of units of $R$ by $U(R)$.
Now we answer the question posed at the end of the last section concerning invertible elements:

\begin{proposition}\label{fraction_invertible_iff_numerator_in_LSat}
	Let $S$ be a left Ore set in $R$ and $(s,r)\in S^{-1}R$.
	Then the following are equivalent:
	\begin{enumerate}[(1)]
		\item
			$(s,r)\in U(S^{-1}R)$.
		\item
			$(1,r)\in U(S^{-1}R)$.
		\item
			$r\in\rho_{S,R}^{-1}(U(S^{-1}R))\quad\Leftrightarrow\quad(1,r)=\rho_{S,R}(r)\in U(S^{-1}R)$.
		\item
			$r\in\operatorname{LSat}(S)$.
	\end{enumerate}
\end{proposition}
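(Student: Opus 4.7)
The plan is to establish a cycle through the four statements, with the bulk of the argument concentrated in the equivalence $(2) \Leftrightarrow (4)$, which carries the actual content. The other equivalences should follow from general facts about $S^{-1}R$ that can be verified directly from the construction in \Cref{construction_of_Ore_localization}.

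First I would dispense with $(2) \Leftrightarrow (3)$ by observing that $(1,r) = \rho_{S,R}(r)$ is literally the definition of the structural homomorphism, so membership of $r$ in $\rho_{S,R}^{-1}(U(S^{-1}R))$ is tautologically the invertibility of $(1,r)$. Next, for $(1) \Leftrightarrow (2)$, the key observation is the factorization $(s,r) = (s,1) \cdot (1,r)$, which one verifies from the multiplication formula with $\tilde s = \tilde r = 1$. Since $s \in S$, the element $(1,s) = \rho_{S,R}(s)$ is a unit in $S^{-1}R$ by the defining property of Ore localization, and a direct computation shows its two-sided inverse is $(s,1)$. So $(s,1)$ is always a unit, and multiplying on the left by $(s,1)^{-1} = (1,s)$ (or on the right by its inverse) lets one toggle freely between $(s,r)$ and $(1,r)$ being units.

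For the substantive direction $(4) \Rightarrow (2)$, assume $r \in \operatorname{LSat}(S)$ and pick $w \in R$ with $wr \in S$. A direct multiplication (taking $\tilde s = 1$, $\tilde r = w$) shows $(1,w) \cdot (1,r) = (1,wr) = \rho_{S,R}(wr)$, which is a unit in $S^{-1}R$. Thus $(1,r)$ has a left inverse, namely $(1,wr)^{-1} \cdot (1,w)$. Because $S^{-1}R$ is a domain, a one-sided inverse forces a two-sided one: if $cb = 1$ then $(bc)^2 = bc$, so $bc(bc-1) = 0$ and the domain property rules out $bc = 0$ (which would force $b = 0$), leaving $bc = 1$. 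Hence $(1,r)$ is a unit.

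The main obstacle is the reverse direction $(2) \Rightarrow (4)$, which requires extracting a witness $w$ from the abstract statement that $(1,r)$ is invertible. Given an inverse $(s',r')$, I would write out $(s',r')(1,r) = (1,1)$ using the multiplication rule: find $\tilde s \in S$, $\tilde r \in R$ with $\tilde r = \tilde s r'$ (from $\tilde r \cdot 1 = \tilde s r'$), so the product becomes $(\tilde s s', \tilde s r' r)$. Now unpacking the equivalence $(\tilde s s', \tilde s r' r) \sim (1,1)$ produces $u \in S$ and $v \in R$ with $u = v \tilde s s' = v \tilde s r' r$. Setting $w := v \tilde s r'$ yields $wr = u \in S$, so $r \in \operatorname{LSat}(S)$. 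The bookkeeping with the two nested applications of the equivalence relation (one hidden in multiplication, one in equality of fractions) is where one must take care, but there are no surprises beyond patient unwinding of the definitions.
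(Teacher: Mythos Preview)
Your proof is correct and follows essentially the same route as the paper: the same factorization $(s,r)=(s,1)\cdot(1,r)$ for $(1)\Leftrightarrow(2)$, and the same witness $w$ with $wr\in S$ for the substantive equivalence $(2)\Leftrightarrow(4)$. The paper streamlines both directions slightly by invoking the earlier basic lemma---using that $(s,wr)=1$ forces $wr=s\in S$ directly, and exhibiting $(wr,w)$ as the inverse of $(1,r)$ in one stroke---whereas you unwind the equivalence relation and the one-sided-to-two-sided passage by hand; but this is a presentational difference, not a different argument.
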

\begin{proof}
	Statements (2) and (3) are equivalent since $\rho_{S,R}(r)=(1,r)$.
	Furthermore, $(s,r)=(s,1)\cdot(1,r)$, where $(s,1)\in U(S^{-1}R)$ with inverse $(1,s)$, which shows the equivalence of (1) and (2).
	
	Starting from (2), let $(1,r)\in U(S^{-1}R)$, then there exists $(s,w)\in S^{-1}R$ such that $1=(s,w)\cdot(1,r)=(s,wr)$, which implies $wr=s\in S$, thus $r\in\operatorname{LSat}(S)$ and we have reached (4).
	
	For the reverse implication, let $r\in\operatorname{LSat}(S)$, then there exists $w\in R$ such that $wr\in S$.
	Now $(wr,w)\cdot(1,r)=(wr,wr)=1$ implies that $(1,r)\in U(S^{-1}R)$.
\end{proof}

This implies that the left saturation closure of left Ore sets is actually saturated on both sides:

\begin{lemma}
	Let $S$ be a left Ore set in $R$.
	Then $\operatorname{LSat}(S)$ is saturated.
\end{lemma}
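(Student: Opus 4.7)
The plan is to reduce the statement to an easy ring-theoretic fact inside the localization, using \Cref{fraction_invertible_iff_numerator_in_LSat} as a dictionary between membership in $\operatorname{LSat}(S)$ and invertibility in $S^{-1}R$.

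First I would note that \Cref{LSat_is_left_saturation_closure}(a) already gives that $\operatorname{LSat}(S)$ is left saturated, so only right saturation needs to be established. Concretely, I must show: if $a,b\in R$ satisfy $ab\in\operatorname{LSat}(S)$, then $a\in\operatorname{LSat}(S)$.

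Next I would translate the hypothesis via \Cref{fraction_invertible_iff_numerator_in_LSat}: $ab\in\operatorname{LSat}(S)$ is equivalent to $\rho_{S,R}(ab)\in U(S^{-1}R)$. Since $\rho_{S,R}$ is a ring homomorphism, this product decomposes as $\rho_{S,R}(a)\cdot\rho_{S,R}(b)\in U(S^{-1}R)$. If $z\in S^{-1}R$ denotes its inverse, then $\rho_{S,R}(a)\cdot\bigl(\rho_{S,R}(b)\cdot z\bigr)=1$, i.e.\ $\rho_{S,R}(a)$ admits a right inverse in $S^{-1}R$.

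The crux is then the standard observation that in a domain a one-sided inverse is automatically two-sided: from $xy=1$ one deduces $(yx-1)y=0$, and since $y\neq 0$ cancellation yields $yx=1$. Because $S^{-1}R$ is a domain, this promotes the right inverse of $\rho_{S,R}(a)$ to a genuine two-sided inverse, so $\rho_{S,R}(a)\in U(S^{-1}R)$. Applying \Cref{fraction_invertible_iff_numerator_in_LSat} in the other direction gives $a\in\operatorname{LSat}(S)$, completing the proof of right saturation. I do not expect a real obstacle here; the only point that needs to be flagged is the use of the domain property of $S^{-1}R$, which is available from the preceding basic lemma.
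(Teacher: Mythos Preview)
Your proposal is correct and follows essentially the same approach as the paper: translate membership in $\operatorname{LSat}(S)$ into invertibility in $S^{-1}R$ via \Cref{fraction_invertible_iff_numerator_in_LSat}, factor $(1,ab)=(1,a)(1,b)$, and use that $S^{-1}R$ is a domain to conclude the factors are units. The only cosmetic differences are that the paper handles both factors $a$ and $b$ simultaneously (obtaining left and right saturation in one stroke rather than invoking \Cref{LSat_is_left_saturation_closure}(a) separately), and that it leaves the ``one-sided inverse is two-sided in a domain'' step implicit where you spell it out.
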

\begin{proof}
	Let $a,b\in R$ such that $a\cdot b\in\operatorname{LSat}(S)$.
	Then $(1,ab)$ is a unit in $S^{-1}R$ by the previous result.
	Now $(1,a)\cdot(1,b)=(1,ab)$ implies that both $(1,a)$ and $(1,b)$ are also units in $S^{-1}R$ since $S^{-1}R$ is a domain, thus $a,b\in\operatorname{LSat}(S)$ again by the previous result.
\end{proof}

Therefore a left Ore set $S$ is saturated if and only if $S=\operatorname{LSat}(S)$.

Apart from this characterization, the left saturation closure has even more interesting applications.
For a start we see that $\operatorname{LSat}$ preserves and reflects the left Ore condition:

\begin{lemma}\label{S_Ore_iff_LSat(S)_Ore}
	Let $S$ be a multiplicatively closed subset of $R$.
	Then $S$ satisfies the left Ore condition in $R$ if and only if $\operatorname{LSat}(S)$ satisfies the left Ore condition in $R$.
\end{lemma}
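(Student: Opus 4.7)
The plan is to prove both directions by unpacking the definition of $\operatorname{LSat}(S)$ and using the Ore condition as a ``production rule''. The key observation is that every element of $\operatorname{LSat}(S)$ becomes an element of $S$ after left multiplication by a suitable element of $R$, so we can always shuttle between the two sets at the cost of an extra left factor. Neither direction will require any auxiliary lemmas beyond the definition of $\operatorname{LSat}(S)$ and the already-proved containment $S\subseteq\operatorname{LSat}(S)$; in particular, we do not need $\operatorname{LSat}(S)$ to be multiplicatively closed, only that it satisfies the Ore condition as a set.

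For the direction ($\Rightarrow$): given $s\in\operatorname{LSat}(S)$ and $r\in R$, choose $w\in R$ with $ws\in S$ (which exists by definition of $\operatorname{LSat}(S)$). Apply the left Ore condition for $S$ to the pair $(ws,r)$ to obtain $\tilde s\in S$ and $\tilde r'\in R$ with $\tilde s\,r=\tilde r'\,(ws)=(\tilde r' w)\,s$. Then $\tilde s\in S\subseteq\operatorname{LSat}(S)$ and $\tilde r:=\tilde r'w\in R$ witness the Ore condition for $\operatorname{LSat}(S)$.

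For the direction ($\Leftarrow$): given $s\in S$ and $r\in R$, note $s\in\operatorname{LSat}(S)$, so by the Ore condition for $\operatorname{LSat}(S)$ there exist $s'\in\operatorname{LSat}(S)$ and $r'\in R$ with $s'\,r=r'\,s$. Choose $w\in R$ such that $ws'\in S$ and left-multiply the identity by $w$ to obtain $(ws')\,r=(wr')\,s$. Then $\tilde s:=ws'\in S$ and $\tilde r:=wr'\in R$ are the desired witnesses for $S$.

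The proof is essentially bookkeeping, and no step looks like a serious obstacle; the only subtlety is recognizing that the ``expanded'' Ore set $\operatorname{LSat}(S)$ and the ``raw'' set $S$ are interchangeable for the purpose of verifying the condition precisely because a single left multiplier converts one to the other, and left-multiplying an equality $\tilde s r=\tilde r s$ by any element of $R$ preserves its form. This observation supplies the missing factor $w$ in each direction.
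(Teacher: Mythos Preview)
Your proof is correct and follows essentially the same approach as the paper: in each direction you pick the element $w$ that witnesses membership in $\operatorname{LSat}(S)$, apply the assumed Ore condition to the appropriate pair, and absorb $w$ into the $R$-factor. The paper's argument differs only in notation.
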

\begin{proof}
	Let $x\in\operatorname{LSat}(S)$, $r\in R$ and $w\in R$ such that $wx\in S$.
	If $S$ satisfies the left Ore condition, then there exist $\tilde{s}\in S$ and $\tilde{r}\in R$ such that $\tilde{s}r=\tilde{r}wx$.
	Since $\tilde{s}\in S\subseteq\operatorname{LSat}(S)$ and $\tilde{r}w\in R$ this implies that $\operatorname{LSat}(S)$ satisfies the left Ore condition.
	
	For the other implication, let $r\in R$ and $s\in S\subseteq\operatorname{LSat}(S)$.
	If $\operatorname{LSat}(S)$ satisfies the left Ore condition, then there exist $x\in\operatorname{LSat}(S)$ and $\tilde{r}\in R$ such that $xr=\tilde{r}s$.
	Let $w\in R$ such that $\hat{s}:=wx\in S$ and define $\hat{r}:=w\tilde{r}\in R$, then $\hat{s}r=wxr=w\tilde{r}s=\hat{r}s$ shows that $S$ satisfies the left Ore condition.
\end{proof}

While the left saturation closure of multiplicatively closed sets is not multiplicatively closed in general, the left Ore condition is sufficient to overcome this problem.

\begin{proposition}\label{localization_at_left_saturation}
	Let $S$ be a left Ore set in $R$.
	Then $\operatorname{LSat}(S)$ is a left Ore set in $R$ and $S^{-1}R\cong\operatorname{LSat}(S)^{-1}R$.
\end{proposition}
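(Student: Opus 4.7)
The plan is to use \Cref{fraction_invertible_iff_numerator_in_LSat} and \Cref{S_Ore_iff_LSat(S)_Ore} to establish that $\operatorname{LSat}(S)$ is a left Ore set, and then to recognize $(S^{-1}R,\rho_{S,R})$ itself as a left Ore localization of $R$ at $\operatorname{LSat}(S)$, deriving the isomorphism from the uniqueness of Ore localizations up to isomorphism.

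For multiplicative closure of $\operatorname{LSat}(S)$, the facts $1\in\operatorname{LSat}(S)$ and $0\notin\operatorname{LSat}(S)$ were already recorded right after the definition of $\operatorname{LSat}$. The nontrivial step is closure under products: given $a,b\in\operatorname{LSat}(S)$, I would apply \Cref{fraction_invertible_iff_numerator_in_LSat} twice to conclude that both $(1,a)$ and $(1,b)$ lie in $U(S^{-1}R)$, so that their product $(1,a)\cdot(1,b)=(1,ab)$ is also a unit; reading the proposition in the opposite direction then yields $ab\in\operatorname{LSat}(S)$. The left Ore condition for $\operatorname{LSat}(S)$ follows directly from \Cref{S_Ore_iff_LSat(S)_Ore}, so $\operatorname{LSat}(S)$ is indeed a left Ore set.

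For the isomorphism, rather than constructing mutually inverse ring homomorphisms by hand, I would verify the two axioms of the definition of a left Ore localization for the pair $(S^{-1}R,\rho_{S,R})$ with respect to the Ore set $\operatorname{LSat}(S)$. Injectivity of $\rho_{S,R}$ is already part of \Cref{construction_of_Ore_localization}. Every $t\in\operatorname{LSat}(S)$ is sent by $\rho_{S,R}$ to the unit $(1,t)$ by \Cref{fraction_invertible_iff_numerator_in_LSat}, handling the first axiom. For the second, an arbitrary $(s,r)\in S^{-1}R$ satisfies $(s,r)=(s,1)\cdot(1,r)=\rho_{S,R}(s)^{-1}\rho_{S,R}(r)$, and $s\in S\subseteq\operatorname{LSat}(S)$, so the required presentation exists. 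Uniqueness of the Ore localization up to isomorphism then gives $S^{-1}R\cong\operatorname{LSat}(S)^{-1}R$.

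The main obstacle is the multiplicative closure step, because $\operatorname{LSat}$ of a general multiplicatively closed set need not be multiplicatively closed; everything hinges on exploiting the Ore property of $S$ through the unit characterization in $S^{-1}R$, which is what makes the product $ab$ land back in $\operatorname{LSat}(S)$.
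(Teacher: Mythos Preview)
Your proof is correct, and it takes a genuinely different route from the paper's on both parts.

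For multiplicative closure, the paper argues directly with elements: given $x,y\in\operatorname{LSat}(S)$ with $ax,by\in S$, it applies the left Ore condition to the pair $(ax,b)$ to produce $\tilde{s}b=\tilde{r}ax$, and then observes $(\tilde{r}a)xy=\tilde{s}by\in S$. You instead route the argument through \Cref{fraction_invertible_iff_numerator_in_LSat}, using that $U(S^{-1}R)$ is closed under products. Your version is cleaner and reuses the machinery already in place (it is essentially the same trick the paper employs in the lemma immediately following \Cref{fraction_invertible_iff_numerator_in_LSat} to show two-sided saturation), whereas the paper's direct argument has the virtue of not invoking the localized ring at all.

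For the isomorphism, the paper writes down the explicit map $\psi:S^{-1}R\to\operatorname{LSat}(S)^{-1}R$, $(s,r)\mapsto(s,r)$, and checks injectivity and surjectivity by hand (surjectivity via $(x,r)=(wx,wr)$ for $wx\in S$). You instead verify that $(S^{-1}R,\rho_{S,R})$ already satisfies the defining axioms of a left Ore localization at $\operatorname{LSat}(S)$ and invoke uniqueness. Your approach is more conceptual and avoids the ``standard Ore-style calculations'' the paper leaves implicit; the paper's approach is more concrete and makes the isomorphism explicit.
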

\begin{proof}
	For the first part it remains to show that $\operatorname{LSat}(S)$ is multiplicatively closed: let $x,y\in\operatorname{LSat}(S)$, then there exist $a,b\in R$ such that $ax,by\in S$.
	By the left Ore condition on $S$ there exist $\tilde{s}\in S$ and $\tilde{r}\in R$ such that $\tilde{s}b=\tilde{r}ax$.
	Now $w:=\tilde{r}a\in R$ and $wxy=\tilde{s}by\in S$ shows that $xy\in\operatorname{LSat}(S)$.
	For the second part, consider the map
	\[
		\psi:S^{-1}R\rightarrow\operatorname{LSat}(S)^{-1}R,\quad(s,r)\mapsto(s,r),
	\]
	which can be shown to be an injective homomorphism of rings with standard Ore-style calculations.
	To see surjectivity, consider a fraction $(x,r)\in\operatorname{LSat}(S)^{-1}R$ and $w\in R$ such that $wx\in S$, then $(x,r)=\psi((wx,wr))$.
\end{proof}

From this we immediately get a sufficient condition for two localizations of $R$ being isomorphic:

\begin{cor}\label{LSat_equality_implies_localizations_isomorphic}
	Let $S,T$ be left Ore sets in a domain $R$.
	If $\operatorname{LSat}(S)=\operatorname{LSat}(T)$, then $S^{-1}R\cong T^{-1}R$.
\end{cor}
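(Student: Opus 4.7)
The plan is to derive this corollary directly as a transitivity statement chaining together two instances of \Cref{localization_at_left_saturation}. Since that proposition tells us that the Ore localization of $R$ at a left Ore set is isomorphic to the Ore localization of $R$ at its left saturation closure, the hypothesis $\operatorname{LSat}(S) = \operatorname{LSat}(T)$ gives us a common ring to route both localizations through.

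Concretely, I would proceed as follows. First, since $S$ is a left Ore set in $R$, \Cref{localization_at_left_saturation} yields that $\operatorname{LSat}(S)$ is itself a left Ore set and provides an isomorphism $\psi_S : S^{-1}R \to \operatorname{LSat}(S)^{-1}R$. Second, applying the same proposition to $T$, we obtain that $\operatorname{LSat}(T)$ is a left Ore set and an isomorphism $\psi_T : T^{-1}R \to \operatorname{LSat}(T)^{-1}R$. Third, the hypothesis $\operatorname{LSat}(S) = \operatorname{LSat}(T)$ means that $\operatorname{LSat}(S)^{-1}R$ and $\operatorname{LSat}(T)^{-1}R$ are literally the same ring, not merely isomorphic. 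Composing, we get the isomorphism
\[
    S^{-1}R \xrightarrow{\psi_S} \operatorname{LSat}(S)^{-1}R = \operatorname{LSat}(T)^{-1}R \xrightarrow{\psi_T^{-1}} T^{-1}R.
\]

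There is no real obstacle here: the whole weight of the argument has already been carried by \Cref{localization_at_left_saturation}. The only subtlety worth flagging explicitly is the step where the equality (rather than mere isomorphism) of the saturation closures is used, so that the two intermediate localizations are identified on the nose; after that, the conclusion is immediate from composing the two supplied isomorphisms.
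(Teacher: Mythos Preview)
Your proposal is correct and matches the paper's approach exactly: the paper states this corollary immediately after \Cref{localization_at_left_saturation} with the remark ``From this we immediately get\ldots'', leaving the chain $S^{-1}R\cong\operatorname{LSat}(S)^{-1}R=\operatorname{LSat}(T)^{-1}R\cong T^{-1}R$ implicit.
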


\begin{lemma}\label{S_in_LSat(T)_iff_LSat(S)_in_LSat(T)}
	Let $S,T$ be multiplicatively closed sets in $R$.
	Then $S\subseteq\operatorname{LSat}(T)$ if and only if $\operatorname{LSat}(S)\subseteq\operatorname{LSat}(T)$.
\end{lemma}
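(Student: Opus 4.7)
The plan is to handle the two implications separately, with the backward direction being essentially a one-liner and the forward direction reducing to the left saturation property of $\operatorname{LSat}(T)$.

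For the easy direction ($\Leftarrow$), I would simply chain the inclusions. We always have $S \subseteq \operatorname{LSat}(S)$ (noted right after the definition of $\operatorname{LSat}$), so the assumption $\operatorname{LSat}(S) \subseteq \operatorname{LSat}(T)$ immediately gives $S \subseteq \operatorname{LSat}(S) \subseteq \operatorname{LSat}(T)$.

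For the forward direction ($\Rightarrow$), assume $S \subseteq \operatorname{LSat}(T)$ and pick an arbitrary $r \in \operatorname{LSat}(S)$. By definition of $\operatorname{LSat}(S)$, there exists $w \in R$ with $wr \in S$. The hypothesis then yields $wr \in \operatorname{LSat}(T)$. Here is the only nontrivial step: invoke \Cref{LSat_is_left_saturation_closure}(a), which tells us that $\operatorname{LSat}(T)$ is left saturated. Applied to the factorization $wr \in \operatorname{LSat}(T)$, this forces $r \in \operatorname{LSat}(T)$, which is what we needed.

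There is essentially no obstacle: the whole argument is a direct application of two previously established facts, namely $S \subseteq \operatorname{LSat}(S)$ and the left saturation of $\operatorname{LSat}(T)$. The only thing worth remarking is that we never use the left Ore condition, multiplicative closure is enough, which matches the statement of the lemma.
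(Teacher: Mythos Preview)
Your proof is correct and follows essentially the same route as the paper: the backward implication via $S\subseteq\operatorname{LSat}(S)$, and the forward implication by pushing an element of $\operatorname{LSat}(S)$ into $\operatorname{LSat}(T)$ using a left multiplier. The only cosmetic difference is that the paper unfolds the definition directly (finding $v$ with $vwx\in T$) rather than quoting \Cref{LSat_is_left_saturation_closure}(a), but that lemma's proof is precisely this unfolding, so the arguments coincide.
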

\begin{proof}
	Let $S\subseteq\operatorname{LSat}(T)$ and $x\in\operatorname{LSat}(S)$, then there exists $w\in R$ such that $wx\in S\subseteq\operatorname{LSat}(T)$.
	But then $vwx\in T$ for some $v\in R$, which implies $x\in\operatorname{LSat}(T)$ and thus $\operatorname{LSat}(S)\subseteq\operatorname{LSat}(T)$.
	The other implication is obvious from $S\subseteq\operatorname{LSat}(S)$.
\end{proof}

\begin{remark}
\label{rem:repr}
From a theoretical viewpoint the left saturation closure is a powerful tool that gives us a \emph{canonical form} of left Ore sets with respect to the corresponding localization.
For instance, in Proposition \ref{fraction_invertible_iff_numerator_in_LSat} we have seen that knowing $\operatorname{LSat}(S)$ is equivalent to knowing $U(S^{-1}R)$.
Unfortunately, the left saturation closure is - depending on the situation - difficult or maybe even impossible to compute or even represent in finite terms.
In \Cref{case_study_Weyl} we discuss a left Ore set, generated by two elements, which has a infinitely (though countably) generated left saturation closure.
In general we do not even expect the saturation closure to be countably generated. 

In our opinion, these facts need not be perceived as \emph{``bad news''}, but rather as an indication that the objects we are dealing with are intrinsically complicated.
\end{remark}

\section{A constructive approach to the left Ore condition}

Given a left Ore set $S$ in a domain $R$ and $(s,r)\in S\times R$, we are interested in constructively finding solutions of the left Ore condition.
We will consider the following sub-problems:
\begin{enumerate}[(1)]
	\item
		Find $\tilde{s}\in S$ such that there exists $\tilde{r}\in R$ satisfying $\tilde{s}r=\tilde{r}s$.
	\item
		Find the set of all $\tilde{s}\in S$ such that there exists $\tilde{r}\in R$ satisfying $\tilde{s}r=\tilde{r}s$.
	\item
		Given $\tilde{s}\in S$ such that there exists $\tilde{r}\in R$ satisfying $\tilde{s}r=\tilde{r}s$, find such $\tilde{r}$.
\end{enumerate}
From a theoretic viewpoint, all solutions of the left Ore condition are equivalent in the sense of Theorem \ref{construction_of_Ore_localization}:

\begin{lemma}
	Let $S$ be a left Ore set in a domain $R$ and $(s,r)\in S\times R$.
	Furthermore, let $(s_1,r_1),(s_2,r_2)\in S\times R$ such that $s_1r=r_1s$ and $s_2r=r_2s$, then $(s_1,r_1)\sim(s_2,r_2)$.
\end{lemma}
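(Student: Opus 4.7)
The plan is to use the left Ore condition on the pair $(s_1, s_2)$ to manufacture the outer multipliers that definition of $\sim$ demands, and then check that these same multipliers automatically equalize the numerators by right-cancelling $s$, using that $R$ is a domain.

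First I would apply the left Ore condition to $s_2\in R$ and $s_1\in S$ to obtain $\tilde{s}\in S$ and $\tilde{r}\in R$ with $\tilde{s}s_2=\tilde{r}s_1$. This immediately supplies the first of the two equations required by the relation $\sim$ from \Cref{construction_of_Ore_localization}.

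Next I would verify the second equation $\tilde{s}r_2=\tilde{r}r_1$ by exploiting the two hypotheses $s_1r=r_1s$ and $s_2r=r_2s$. Left-multiplying the first by $\tilde{r}$ and the second by $\tilde{s}$ and invoking $\tilde{s}s_2=\tilde{r}s_1$ yields
\[
\tilde{s}r_2\,s \;=\; \tilde{s}s_2\,r \;=\; \tilde{r}s_1\,r \;=\; \tilde{r}r_1\,s,
\]
so $(\tilde{s}r_2-\tilde{r}r_1)\,s=0$. Since $s\in S$ we have $s\neq 0$, and since $R$ is a domain this forces $\tilde{s}r_2=\tilde{r}r_1$, which is precisely the remaining equation.

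There is no real obstacle here: the whole argument is a direct unwinding of the definition of $\sim$, with the only subtlety being the order in which the Ore condition is applied (to $s_2$ and $s_1$, not to $r$ and $s$) so that the outer multipliers line up correctly with the pair to be compared. The domain hypothesis on $R$ is used exactly once, at the very end, to perform the right-cancellation of $s$.
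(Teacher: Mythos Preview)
Your proof is correct and follows essentially the same route as the paper: apply the left Ore condition to the pair of denominators, then right-cancel $s$ using that $R$ is a domain. The only cosmetic difference is that the paper obtains $\hat{s}s_1=\hat{r}s_2$ and $\hat{s}r_1=\hat{r}r_2$ (implicitly using symmetry of $\sim$), whereas you obtain $\tilde{s}s_2=\tilde{r}s_1$ and $\tilde{s}r_2=\tilde{r}r_1$, matching the defining condition of $\sim$ verbatim.
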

\begin{proof}
	Let $\hat{s}\in S$ and $\hat{r}\in R$ such that $\hat{s}s_1=\hat{r}s_2$, then
	\[
		\hat{s}r_1s
		=\hat{s}s_1r
		=\hat{r}s_2r
		=\hat{r}r_2s
	\]
	implies $\hat{s}r_1=\hat{r}r_2$ and thus $(s_1,r_1)\sim(s_2,r_2)$.
\end{proof}

Nevertheless, the algorithms presented later strive to compute a solution to problem (2) above, since even checking the equivalence of two left fractions is a non-trivial task.

As a last observation, note that the solution to problem (3) is unique: if there exist $r_1,r_2\in R$ such that $r_2s=\tilde{s}r=r_1s$, then $(r_1-r_2)s=0$, which implies $r_1=r_2$ since $R$ is a domain and $s\neq0$.

\subsection{The kernel technique}

Let $S$ be a multiplicatively closed subset of a domain $R$.
To the best of our knowledge there is no algorithm to decide whether $S$ is a left Ore set in $R$.
Usually such facts are established by the means of theoretical proofs.

To deal with this issue, consider the map
\[
	\varphi_{s,r}:R\overset{\cdot r}{\longrightarrow}R/Rs,\quad
	x\mapsto xr+Rs.
\]
It is a homomorphism of left $R$-modules and the intersection of its kernel with $S$ is exactly the solution of (2) from above.
This immediately gives us the following characterization of the left Ore property:

\begin{proposition}\label{Ore_condition_iff_all_kernels_intersect}
	The following are equivalent:
	\begin{enumerate}[(1)]
		\item
			$S$ satisfies the left Ore condition in $R$.
		\item
			For all $(s,r)\in S\times R$, $\ker(\varphi_{s,r})\cap S\neq\emptyset$.
	\end{enumerate}
\end{proposition}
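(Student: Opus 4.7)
The plan is to observe that this proposition is essentially a reformulation of the left Ore condition in the language of module-theoretic kernels, so the proof reduces to unwinding definitions; no genuine obstacle arises.

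First, I would give an explicit description of $\ker(\varphi_{s,r})$. By definition, an element $x \in R$ lies in this kernel if and only if $xr + Rs = 0 + Rs$ in the left $R$-module $R/Rs$, which is the same as saying $xr \in Rs$, which in turn is equivalent to the existence of some $\tilde{r} \in R$ with $xr = \tilde{r}s$.

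Next I would intersect this description with $S$ to obtain that $\ker(\varphi_{s,r}) \cap S$ is precisely the set of $\tilde{s} \in S$ for which there exists $\tilde{r} \in R$ satisfying $\tilde{s}r = \tilde{r}s$. This matches verbatim the set from sub-problem (2) in the preceding discussion. Thus the non-emptiness of $\ker(\varphi_{s,r}) \cap S$ for a fixed pair $(s,r) \in S \times R$ is exactly the assertion that the left Ore condition is solvable at that pair.

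Finally, I quantify over all pairs: condition (2) of the proposition says $\ker(\varphi_{s,r}) \cap S \neq \emptyset$ for every $(s,r) \in S \times R$, which is exactly the universally quantified left Ore condition in (1). So (1) $\Leftrightarrow$ (2) holds tautologically. The interesting content is not in this proof but in the subsequent constructive program, where $\ker(\varphi_{s,r})$ is treated as an algorithmically accessible object (e.g.\ via Gröbner-basis-based syzygy computations over $G$-algebras), which is where the real difficulty of the so-called ``Ore Dream'' ultimately resides.
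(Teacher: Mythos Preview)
Your proposal is correct and matches the paper's approach: the paper states the proposition as an immediate consequence of the definition of $\varphi_{s,r}$ without writing out a formal proof, and your argument simply makes explicit the unwinding of definitions that the paper leaves to the reader. Your closing remark about where the genuine algorithmic difficulty lies is also in line with the paper's perspective.
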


\begin{remark}
	Provided we can check algorithmically whether $\ker(\varphi_{s,r})\cap S$ is empty, Proposition \ref{Ore_condition_iff_all_kernels_intersect} also allows us to constructively prove that a given set $S$ is \emph{not} a left Ore set in $R$, if we (correctly) suspect $s$ and $r$ to violate the Ore condition (see \Cref{example_geometric_shift}).
\end{remark}

After choosing a $\tilde{s}\in\ker(\varphi_{s,r})\cap S$, $\tilde{r}$ is the unique solution of the equation $\tilde{s}r=\tilde{r}s$.
These considerations are combined in the procedure \textsc{LeftOre}.

\begin{algorithm}
	\KwIn{$(s,r)\in S\times R$.}
	\KwOut{$(\tilde{s},\tilde{r})\in S\times R$ such that $\tilde{s}r=\tilde{r}s$.}
	\Begin{
		let $\varphi:R\rightarrow R/Rs,~x\mapsto xr-Rs$\;
		compute $\ker(\varphi)$\;
		compute any $\tilde{s}\in\ker(\varphi)\cap S$\;
		compute the unique solution $\tilde{r}\in R$ of the equation $\tilde{s}r=\tilde{r}s$\;
		\Return{$(\tilde{s},\tilde{r})$}
	}
	\caption{\textsc{LeftOre}}
\end{algorithm}


\begin{remark}
	Instead of $\varphi_{s,r}$ one could also consider the seemingly similar map
	\[
		\psi_{s,r}:R\overset{\cdot s}{\longrightarrow}R/Rr,\quad
		x\mapsto xs+Rr.
	\]
	Here we avoid intersecting $\ker(\psi_{s,r})$ with $S$ at first, but the kernel may contain numerous false candidates for $\tilde{r}$: although there exists $y\in R$ such that $yr=\tilde{r}s$, there is no guarantee that we can find such $y$ in $S$.
	Therefore we need to intersect $M:=\{(\tilde{r},\tilde{s})\in\ker(\psi_{s,r})\times R\mid\tilde{r}s=\tilde{s}r\}$ with $R\times S$, which poses the same problems as intersecting $\ker(\varphi_{s,r})$ with $S$ directly.
	For this reason we prefer to work with $\varphi_{s,r}$.
\end{remark}

To be able to actually carry out the computations from the procedure \textsc{LeftOre} and turn it into an algorithm we need to work in a computation-friendly setting in which all intermediate steps can be carried out.

\subsection{The class of G-algebras}\label{sect_G-algebras}

\begin{definition}
	For a field $K$, $n\in\mathbb{N}$ and $1\leq i<j\leq n$ consider non-zero constants $c_{ij}\in K$ and polynomials $d_{ij}\in K[x_1,\ldots,x_n]$.
	Suppose that there exists a monomial total well-ordering $<$ on $K[x_1,\ldots,x_n]$, such that for any $1\leq i<j\leq n$ either $d_{ij}=0$ or the leading monomial of $d_{ij}$ with respect to $<$ is smaller than $x_i x_j$.
	The $K$-algebra
	\[
		A
		:=K\langle x_1,\ldots,x_n\mid\{x_jx_i=c_{ij}x_ix_j+d_{ij}\colon1\leq i<j\leq n\}\rangle
	\]
	is called a \emph{$G$-algebra}, if $\{x_1^{\alpha_1}\cdot\ldots\cdot x_n^{\alpha_n}:\alpha_i\in\mathbb{N}_0\}$ is a $K$-basis of $A$.
\end{definition}
$G$-algebras \cite{LS, lev_diss} are also known as \emph{algebras of solvable type} \cite{KW,Kredel2015,Kr} and as \emph{PBW algebras} \cite{BGV}. $G$-algebras are left and right Noetherian domains that occur naturally in various situations and encompass algebras of linear functional operators modeling difference and differential equations.

\begin{example}
\label{exAlgebrasOfOps}
	Let $K$ be a field, $q_i\in K\setminus\{0\}$ and $n\in\mathbb{N}$.
	Common $G$-algebras include the following examples, where only the relations between non-commutating variables are listed:
	\begin{itemize}
		\item
			The commutative polynomial ring $K[x_1,\ldots,x_n]$.
		\item
			The $n$-th \emph{Weyl algebra} $A_n:=K\langle x_1,\ldots,x_n,\partial_1,\ldots,\partial_n\rangle$ with $\partial_ix_i=x_i\partial_i+1$ for all $1\leq i\leq n$.
		\item
			The $n$-th \emph{shift algebra} $S_n:=K\langle x_1,\ldots,x_n,s_1,\ldots,s_n\rangle$ with $s_ix_i=x_is_i+s_i=(x_i+1)s_i$ for all $1\leq i\leq n$.
		\item
			The $n$-th \emph{$q$-shift algebra} $S^{(q)}_n:=K\langle x_1,\ldots,x_n,s_1,\ldots,s_n\rangle$ with $s_ix_i=q_i x_is_i$ for all $1\leq i\leq n$.
		\item
			The $n$-th \emph{$q$-Weyl algebra} $A^{(q)}_n:=K\langle x_1,\ldots,x_n,\partial_1,\ldots,\partial_n\rangle$ with $\partial_ix_i=q_i x_i\partial_i+1$ for all $1\leq i\leq n$.
		\item
			The $n$-th \emph{integration algebra} $K\langle x_1,\ldots, x_n, I_1,\ldots, I_n\rangle$
			with $I_i x_i = x_i I_i + I_i^2$  for all $1\leq i\leq n$.
	\end{itemize}
\end{example}

Furthermore, there exists a well-developed Gr\"obner basis theory for $G$-algebras which is close to the commutative case and not only allows us to explicitly compute $\ker(\varphi_{s,r})$ (which is finitely generated), but also to solve the equation $\tilde{s}r=\tilde{r}s$ for $\tilde{r}$ via division with remainder. Details can be found in \cite{lev_diss}.

Note that the basic concept of \textsc{LeftOre} can be adapted to other, yet more general settings.

\subsection{A partial classification of Ore localizations}

The only remaining problem to solve is the intersection of a left ideal with a left Ore set.
Unfortunately, due to their multiplicative nature, left Ore sets are seldom finitely generated as monoids, therefore we have to single out interesting classes of Ore sets and deal with them individually.
To this end, we propose the following partial classification of left Ore localizations:

\begin{definition}
	Let $K$ be a field and $R$ a $K$-algebra and a domain.
	\begin{itemize}
		\item
			Let $S$ be a left Ore set in $R$ that is generated as a multiplicative monoid by at most countably many elements.
			Then $S^{-1}R$ is called a \emph{monoidal localization}.
		\item
			Let $n\in\mathbb{N}$, $K[x]:=K[x_1,\ldots,x_n]$ a subring of $R$ and $\mathfrak{p}$ a prime ideal in $K[x]$, then $S:=K[x]\setminus\mathfrak{p}$ is multiplicatively closed.
			If $S$ is a left Ore set in $R$, then $S^{-1}R$ is called a \emph{geometric localization}.
		\item
			Let $T$ be a $K$-subalgebra of $R$, then $S:=T\setminus\{0\}$ is multiplicatively closed.
			If $S$ is a left Ore set in $R$, then $S^{-1}R$ is called a \emph{(partial) rational localization}.
	\end{itemize}
\end{definition}

All three types of localizations have commutative counterparts:

\begin{example}
	Let $R$ be a commutative domain and $K$ a field.
	\begin{itemize}
		\item
			Let $f\in R\setminus\{0\}$, then $R_f=[f]^{-1}R=\{f^k\mid k\in\mathbb{N}_0\}^{-1}R$ is a monoidal localization.
		\item
			Let $\mathfrak{p}$ be a prime ideal in the polynomial ring $K[x]$, then $K[x]_\mathfrak{p}=(K[x]\setminus\mathfrak{p})^{-1}K[x]$ is a geometric localization.
		\item
			$\operatorname{Quot}(R)=(R\setminus\{0\})^{-1}R$ is a rational localization, as well as $K(x)[y]=(K[x]\setminus\{0\})^{-1}K[x,y]$.
	\end{itemize}
\end{example}

An important instance of rational localization is the following generalization of the classical quotient field construction:

\begin{definition}
	If $S:=R\setminus\{0\}$ is a left Ore set in a domain $R$, then $R$ is called a \emph{left Ore domain}.
	The localization $S^{-1}R$ is called \emph{left quotient (skew) field} of $R$ and denoted $\operatorname{Quot}(R)$.
\end{definition}

Therefore, any left Ore domain can be embedded into a division ring.
This holds in particular for any $G$-algebra.

\section{Case study: localizations of the first Weyl algebra}\label{case_study_Weyl}

In this section we want to explore at an example how one can utilize left saturation not only to gain theoretical insight but also as a preprocessing step before attempting any computations in a computer algebra system.
To this end we will consider a field $K$ and $A_1=A_1(K)=K\langle x,\partial\mid\partial x=x\partial+1\rangle$, the first Weyl algebra over $K$.

We are now interested in making $x$ and $\partial$ invertible, which means finding a left Ore set in $A_1$ that contains $V:=[x,\partial]$.
By the forthcoming \Cref{Weyl_Ore_sets} we have that $[x]$ and $[\partial]$ already are left Ore sets in $A_1$, thus $V:=[[x]\cup[\partial]]$ is indeed a left Ore set in $A_1$ as a multiplicatively closed set generated by left Ore sets (Lemma 4.1 in \cite{krause_lenagan}).

The \emph{Euler operator} in $A_1$ is defined as $\theta:=x\cdot\partial\in A_1$.
The following ``commutation rules'' can be proven by induction:

\begin{lemma}\label{theta_commutation_rules}
	For all $m,n\in\mathbb{N}_0$ and all $z\in K$ we have
	\[
		(\theta+z)^mx^n
		=x^n(\theta+z+n)^m
		\quad\text{and}\quad
		\partial^n(\theta+z)^m
		=(\theta+z+n)^m\partial^n.
	\]
\end{lemma}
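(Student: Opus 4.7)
The plan is to reduce both identities to the single base commutation
\[
\theta x = x(\theta + 1), \qquad \partial\theta = (\theta + 1)\partial,
\]
which follows directly from the defining relation $\partial x = x\partial + 1$: indeed $\theta x = x\partial x = x(x\partial + 1) = x(\theta + 1)$, and symmetrically $\partial\theta = \partial x\partial = (x\partial + 1)\partial = (\theta + 1)\partial$. Both identities are then obtained by two nested inductions (first on $n$, then on $m$), with the constant $z \in K$ simply tagging along because it lies in the center.

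First I would prove the case $m = 1$ with $z = 0$, i.e.\ $\theta x^n = x^n(\theta + n)$ and $\partial^n \theta = (\theta + n)\partial^n$, by induction on $n$. The inductive step for the first reads
\[
\theta x^{n+1} = (\theta x^n)x = x^n(\theta + n)x = x^n\bigl(x(\theta + 1) + nx\bigr) = x^{n+1}\bigl(\theta + (n+1)\bigr),
\]
and an entirely analogous computation handles $\partial^{n+1}\theta$. Since $z$ commutes with every element of $A_1$, adding $z$ on both sides yields the ``$m = 1$'' cases
\[
(\theta + z)x^n = x^n(\theta + z + n), \qquad \partial^n(\theta + z) = (\theta + z + n)\partial^n.
\]

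Next I would induct on $m$, with the previous step supplying both the base case $m = 1$ and the essential commutation rule. For the first identity,
\[
(\theta + z)^{m+1}x^n = (\theta + z)\bigl[(\theta + z)^m x^n\bigr] = (\theta + z)\,x^n(\theta + z + n)^m,
\]
and applying the $m = 1$ result once more moves $(\theta + z)$ past $x^n$ at the cost of shifting it to $(\theta + z + n)$, producing $x^n(\theta + z + n)^{m+1}$ as desired. The second identity follows by the mirror argument, pulling $\partial^n$ leftward past successive factors of $(\theta + z)$.

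There is no real obstacle here; the statement is a routine consequence of the Leibniz-type relation in $A_1$, and the only thing to be careful about is the order in which the nested inductions are carried out, so that each step genuinely relies on an identity already established rather than on the one being proved.
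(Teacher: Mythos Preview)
Your argument is correct and follows exactly the route the paper indicates: the paper merely states that the identities ``can be proven by induction'' without spelling out the details, and your nested induction (first on $n$ for the $m=1$ case, then on $m$) is precisely the intended verification. There is nothing to add.
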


Now we are able to compute the left saturation closure of $V$ with some additional knowledge about factorizations:

\begin{proposition}
	Let $p:=\operatorname{char}(K)\in\mathbb{P}\cup\{0\}$.
	Then
	\[
		\operatorname{LSat}(V)
		=[\{x,\partial\}\cup(\theta+((\mathbb{Z}/p\mathbb{Z})\setminus\{0,1\}))\cup(K\setminus\{0\})].
	\]
\end{proposition}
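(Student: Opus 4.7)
My plan is to establish both inclusions by combining the commutation rules of \Cref{theta_commutation_rules} with the $\mathbb{Z}$-grading on $A_1$ defined by $\deg x = +1$ and $\deg \partial = -1$ (the defining relation $\partial x - x\partial - 1$ is homogeneous of degree $0$, so this is genuinely a grading). I write $G$ for the monoid on the right-hand side of the claimed equality.

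For $G \subseteq \operatorname{LSat}(V)$: I would first invoke \Cref{localization_at_left_saturation} to conclude that $\operatorname{LSat}(V)$ is multiplicatively closed (as $V$ is left Ore), reducing the task to checking each generator. Clearly $x, \partial \in V$, and every $c \in K \setminus \{0\}$ is a unit with $c^{-1}c = 1 \in V$. The interesting case is $\theta + z$ for $z \in (\mathbb{Z}/p\mathbb{Z}) \setminus \{0,1\}$: by a straightforward induction from \Cref{theta_commutation_rules} one obtains
\[
    \partial^n x^n = \prod_{i=1}^n (\theta + i) \qquad\text{and}\qquad x^n \partial^n = \prod_{i=0}^{n-1}(\theta - i).
\]
Picking $n \in \mathbb{Z}$ with $n \equiv z \pmod{p}$ and $\abs{n}$ large enough, $\theta + z$ appears among the pairwise commuting factors of one of these products, so the product of the remaining factors provides the witness.

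For the converse $\operatorname{LSat}(V) \subseteq G$, I need two preparations. \emph{First,} by induction on word length and repeated application of \Cref{theta_commutation_rules}, every element of $V$ admits a normal form
\[
    v = x^A \prod_i (\theta + n_i) \partial^B \quad\text{with }A, B \geq 0,\ n_i \in \mathbb{Z},
\]
so in particular the $\theta$-polynomial factor of $v$ always splits into linear shifts with integer roots. \emph{Second,} the grading turns $A_1$ into a graded domain with homogeneous components $A_1^{(n)} = x^n K[\theta]$ for $n \geq 0$ and $A_1^{(n)} = K[\theta]\partial^{-n}$ for $n \leq 0$, and in any graded domain the factors of a homogeneous element are themselves homogeneous. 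Hence for $r \in \operatorname{LSat}(V)$ with $wr = v \in V$, $r$ is homogeneous and takes one of the forms $c \cdot x^a q(\theta)$, $c \cdot q(\theta)\partial^b$, or $c \cdot q(\theta)$ for some $c \in K \setminus \{0\}$, $a, b \geq 0$, and monic $q \in K[\theta]$. Multiplying out $wr$ and normalizing will then yield an equality in $K[\theta]$ showing that $q(\theta)$ divides a product of linear shifts $(\theta + n)$ with $n \in \mathbb{Z}$; since $K[\theta]$ is a UFD, $q(\theta)$ itself is such a product (with shifts in $\mathbb{Z}/p\mathbb{Z}$ if $p > 0$), and $r$ is manifestly a product of generators of $G$.

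The main obstacle will be this last normalization step: after writing $w$ in its homogeneous standard form and multiplying out $wr$, one must use commutation rules such as $R(\theta) x^n = x^n R(\theta + n)$ and $R(\theta) \partial^n = \partial^n R(\theta - n)$ to peel off the common $x^{\deg v}$ (or $\partial^{-\deg v}$) factor and extract the divisibility statement in $K[\theta]$. This bifurcates into sub-cases depending on the signs of $\deg w$ and $\deg r$, and is tedious but routine; no new conceptual ingredient is required beyond the grading observation and the UFD property of $K[\theta]$.
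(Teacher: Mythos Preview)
Your proof is correct, and for the easier inclusion $G\subseteq\operatorname{LSat}(V)$ it is close in spirit to the paper's: both produce explicit left multiples of $\theta+z$ lying in $V$, though the paper uses the single-step identity $x^n(\theta+n)=\theta x^n\in V$ (respectively $\partial^{-n}(\theta+n)=\theta\partial^{-n}\in V$) rather than your product formula $\partial^n x^n=\prod_{i=1}^n(\theta+i)$.

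For the harder inclusion $\operatorname{LSat}(V)\subseteq G$ the two arguments genuinely diverge. The paper normalizes an arbitrary word in $x,\partial$ to the shape $p\cdot q\cdot t^m$ with $p\in[\theta+(\mathbb{Z}/p\mathbb{Z}\setminus\{0,1\})]$, $q\in[\theta,\theta+1]$, $t\in\{x,\partial\}$, and then invokes an external factorization result (Lemma~2.6 of \cite{GHL15p}) stating that \emph{every} nontrivial factorization of such a word arises from this normal form by the commutation rules and by splitting $\theta=x\partial$, $\theta+1=\partial x$; hence any right factor $r$ is already a product of the listed generators. Your route instead exploits the $\mathbb{Z}$-grading on $A_1$ with $\deg x=1$, $\deg\partial=-1$: since words in $V$ are homogeneous and factors of homogeneous elements in a graded domain are homogeneous, $r$ lies in some $A_1^{(n)}=x^nK[\theta]$ or $K[\theta]\partial^{-n}$, and the problem reduces to a divisibility statement inside the commutative UFD $K[\theta]$. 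Your argument is longer to execute (the sign-of-degree case split you flag is real, if mechanical) but entirely self-contained, whereas the paper's is shorter precisely because it outsources the structural content to the cited lemma. Either way the essential point is the same: the ``$K[\theta]$-part'' of $r$ must split into integer shifts of $\theta$.
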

\begin{proof}
	Let $w\in\mathbb{Z}/p\mathbb{Z}$ and $S$ be the right-hand side of the equation in the claim.
	First consider the case $p=0$: if $w\in\mathbb{N}_0$, then $x^w(\theta+w)=\theta x^w=x\partial x^w\in V$, if $-w\in\mathbb{N}$, then $\partial^{-w}(\theta+w)=(\theta+w-w)\partial^{-w}=\theta\partial^{-w}=x\partial^{1-w}\in V$.
	If $p>0$ we can always find $n\in\mathbb{N}_0$ such that $w=n+p\mathbb{Z}$ and we get $x^n(\theta+w)=\theta x^n\in V$.
	In any case we see that $\theta+w\in\operatorname{LSat}(V)$.
	Furthermore, for any $k\in K\setminus\{0\}$ we have $k^{-1}\cdot k=1\in V$, thus $S\subseteq\operatorname{LSat}(V)$.\\
	Now let $r\in\operatorname{LSat}(V)$, then there exists $w\in A_1$ such that $wr\in V$, thus $wr=\prod_{i=1}^{n}s_i$ for some $n\in\mathbb{N}_0$ and $s_i\in\{x,\partial\}$.
	Using the commutation rules from \Cref{theta_commutation_rules} we can rewrite $wr$ into $wr=p\cdot q\cdot t^m$, where $t\in\{x,\partial\}$, $m\in\mathbb{N}_0$, $p\in[\theta+((\mathbb{Z}/p\mathbb{Z})\setminus\{0,1\})]$ and $q\in[\theta,\theta+1]$ (note that $\theta+w_1$ and $\theta+w_2$ commute for $w_1,w_2\in K$).
	According to Lemma 2.6 in \cite{GHL15p} any other non-trivial factorization of $wr$ (like $wr$ itself) can be derived by using the commutation rules and rewriting $\theta$ resp. $\theta+1$ as $x\partial$ resp. $\partial x$.
	But all factors that can be created in this way are already contained in $S$, thus $r\in S$ and therefore $\operatorname{LSat}(V)\subseteq S$.
\end{proof}

We will now see that this localization behaves fundamentally different depending on $p$.
We need the notion of the Gelfand-Kirillov dimension ($\operatorname{GKdim}$, see \cite{krause_lenagan,mcconnell_robson}), which is defined for both rings and modules with respect to a fixed field $K$. Over Noetherian domains its behavior is somewhat similar to that of Krull dimension ($\operatorname{Krdim}$) over commutative rings.

Note that for any field $K$ one has $\operatorname{GKdim}_K(A_1(K))=2$.

\subsection{Characteristic zero case}


\begin{lemma}
	Let $K$ be an algebraically closed field of characteristic $p=0$.
	Then 
	\begin{enumerate}[(a)]
		\item
			$\operatorname{GKdim}(V^{-1}A_1)=3$,
		\item
			$\operatorname{GKdim}(\operatorname{Quot}(A_1))=\infty$.
	\end{enumerate}
\end{lemma}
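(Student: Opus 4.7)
The plan is to handle (a) and (b) separately. For (a) I would exhibit $V^{-1}A_1$ as a skew Laurent polynomial ring over a commutative, non--finitely generated coefficient ring, and then estimate the Hilbert function of the natural five-dimensional generating subspace $W := K + Kx + Kx^{-1} + K\partial + K\partial^{-1}$ both from below and from above. For (b) the plan is to invoke Makar-Limanov's classical theorem that $\operatorname{Quot}(A_1)$ contains a free non-commutative $K$-subalgebra of rank two.

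To prove (a), I would first observe that in $V^{-1}A_1$ every integer shift $\theta + n$ of the Euler operator $\theta := x\partial$ is a unit: $\theta = x\partial$ and $\theta + 1 = \partial x$ are products of units, while for $n \notin \{0,1\}$ the element $\theta + n$ lies in $\operatorname{LSat}(V)$ by the preceding proposition, so \Cref{fraction_invertible_iff_numerator_in_LSat} applies. Together with $\partial = x^{-1}\theta$, $\partial^{-1} = \theta^{-1}x$ and the commutation rule $\theta x = x(\theta + 1)$, this yields a natural isomorphism $V^{-1}A_1 \cong R_0[x^{\pm 1};\sigma]$, the skew Laurent polynomial ring, where
\[
R_0 := K[\theta]\bigl[(\theta + n)^{-1} : n \in \mathbb{Z}\bigr] \qquad \text{and} \qquad \sigma(\theta) = \theta - 1.
\]
Partial fractions in the commutative localization $R_0$ give the $K$-basis $\{\theta^m : m \geq 0\} \cup \{(\theta + k)^{-j} : k \in \mathbb{Z}, j \geq 1\}$ of $R_0$, hence a $K$-basis $\mathcal{B} := \{\theta^m x^a,\,(\theta + k)^{-j} x^a : m \geq 0,\, j \geq 1,\, k, a \in \mathbb{Z}\}$ of $V^{-1}A_1$.

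Next I would bound $\dim_K W^N$ on both sides. For the \emph{lower bound}, $\theta^{-1} = \partial^{-1}x^{-1}$, so $\theta^{-j} \in W^{2j}$, and conjugation by $x^k$ gives $(\theta + k)^{-j} = x^{-k}\theta^{-j}x^k \in W^{2(|k| + j)}$; thus $(\theta + k)^{-j} x^a \in W^{2(|k| + j) + |a|}$, and counting the triples $(k, j, a)$ with $2(|k| + j) + |a| \leq N$ produces $\Omega(N^3)$ elements of $\mathcal{B}$ in $W^N$. For the \emph{upper bound}, I would substitute $\partial = x^{-1}\theta$ and $\partial^{-1} = \theta^{-1}x$ in an arbitrary length-$N$ product of generators to obtain a word of length $\leq 2N$ in $\{x^{\pm 1}, \theta^{\pm 1}\}$, then use $x^{\pm 1}\theta = (\theta \mp 1)x^{\pm 1}$ to commute every $x^{\pm 1}$ to the right. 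The resulting expression takes the form $r(\theta)\,x^a$ with $|a| \leq 2N$ and with $r(\theta)$ a product of at most $2N$ factors $(\theta + s)^{\pm 1}$ of shifts $|s| \leq 2N$. Partial fractions place $r(\theta)$ in the span of $O(N^2)$ basis elements of $R_0$, and multiplying by $x^a$ with $|a| \leq 2N$ yields $\dim_K W^N = O(N^3)$. Combining the two bounds gives $\operatorname{GKdim}(V^{-1}A_1) = 3$.

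For (b), one appeals to Makar-Limanov's theorem: $\operatorname{Quot}(A_1)$ contains a free $K$-subalgebra $K\langle u, v\rangle$ on two non-commuting generators. Since $K\langle u, v\rangle$ has exponential dimension growth, $\operatorname{GKdim}(K\langle u, v\rangle) = \infty$, and by monotonicity of GKdim under inclusion of subalgebras one concludes $\operatorname{GKdim}(\operatorname{Quot}(A_1)) = \infty$. The main obstacle I anticipate is the bookkeeping for the upper bound in (a): one must verify that after substitution and commutation every shift $s$, every pole order $j$ and the net $x$-exponent $a$ stays linear in $N$, and that the partial-fraction expansion does not introduce any hidden degree growth. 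The combinatorics is elementary but must be executed with care.
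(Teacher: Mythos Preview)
Your argument is correct, and for part (b) it coincides with the paper's: both simply invoke Makar-Limanov's theorem that $\operatorname{Quot}(A_1)$ contains a free subalgebra on two generators.

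For part (a) the routes diverge. The paper does not compute anything; it cites Example~4.11 in Krause--Lenagan and moves on. You instead supply a self-contained calculation: you identify $V^{-1}A_1$ with the skew Laurent extension $R_0[x^{\pm 1};\sigma]$ over $R_0=K[\theta][(\theta+n)^{-1}:n\in\mathbb{Z}]$, pick the explicit generating space $W=K+Kx+Kx^{-1}+K\partial+K\partial^{-1}$, and bound $\dim_K W^N$ from both sides by a cubic in $N$ via the partial-fraction basis of $R_0$. The bookkeeping you flag (that after substituting $\partial^{\pm 1}$ and commuting $x^{\pm 1}$ to the right every shift, pole order and $x$-exponent stays $O(N)$) does go through, since each commutation $x^{\pm 1}(\theta+s)^{\varepsilon}=(\theta+s\mp 1)^{\varepsilon}x^{\pm 1}$ changes a shift by exactly one and there are at most $2N$ letters in the rewritten word. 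What your approach buys is independence from the reference and an explicit growth function; what the paper's citation buys is brevity. In substance your computation is essentially the one carried out in the cited example, so the two are compatible rather than competing.
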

\begin{proof}
	The first claim follows from Example 4.11 in \cite{krause_lenagan}, while the second result is due to Makar-Limanov, who showed in \cite{makar-limanov} that $\operatorname{Quot}(A_1)$ contains a free subalgebra generated by two elements.
\end{proof}

In particular, we have $\operatorname{GKdim}(A_1)<\operatorname{GKdim}(V^{-1}A_1)<\operatorname{GKdim}(\operatorname{Quot}(A_1))$.

\subsection{Positive characteristic case}

In positive characteristic the Weyl algebra has center $K[x^p,\partial^p]$, while in the case of
characteristic zero the center is just $K$.

\begin{lemma}
Let $K$ be of characteristic $p>0$. Then 
		\begin{enumerate}[(a)]
		\item
			$\operatorname{GKdim}(V^{-1}A_1)=2$,
		\item
			$\operatorname{GKdim}(\operatorname{Quot}(A_1))=2$.
	\end{enumerate}
	In particular, any localization of $A_1$ has Gelfand-Kirillov dimension 2.
\end{lemma}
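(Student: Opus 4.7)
The plan is to exploit the fact that in positive characteristic $A_1$ becomes a polynomial identity (PI) algebra, being finite over its center, so that every Ore localization is essentially finite-dimensional over a commutative localization of that center. First I would record (or verify directly) the well-known structural result: in characteristic $p > 0$ the center of $A_1$ is $Z := K[x^p,\partial^p]$, and $A_1$ is a free $Z$-module of rank $p^2$ with PBW basis $\{x^i\partial^j : 0 \le i,j < p\}$. In particular $Z$ is a commutative polynomial ring in two variables with $\operatorname{GKdim}(Z) = 2$, and the general principle that GK dimension is preserved under module-finite extensions over a central subring already gives $\operatorname{GKdim}(A_1) = \operatorname{GKdim}(Z) = 2$, in agreement with the direct computation via the canonical filtration.

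For part (b), I would argue as follows. Every non-zero element of $Z$ is central in $A_1$ and a non-zero-divisor, hence satisfies the two-sided Ore condition. The localization $A_1\otimes_Z\operatorname{Quot}(Z)$ is thus defined and is a $p^2$-dimensional algebra over the field $\operatorname{Quot}(Z)=K(x^p,\partial^p)$; being a finite-dimensional domain over a field (as $A_1$ is a domain), it is a division ring, therefore the smallest skew field containing $A_1$, i.e.\ $\operatorname{Quot}(A_1)$ itself. Consequently $\operatorname{Quot}(A_1)$ is a finite module over $\operatorname{Quot}(Z)$, and invoking the finite-module principle for GK dimension once more yields $\operatorname{GKdim}(\operatorname{Quot}(A_1)) = \operatorname{GKdim}(K(x^p,\partial^p))$. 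This last quantity is $2$ by the theorem (see \cite{krause_lenagan}) that the GK dimension of a finitely generated field extension of $K$ equals its transcendence degree over $K$.

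Part (a) and the concluding \emph{in particular} claim then fall out immediately: any Ore localization $S^{-1}A_1$ sits between $A_1$ and $\operatorname{Quot}(A_1)$, so subalgebra monotonicity of GK dimension sandwiches $\operatorname{GKdim}(S^{-1}A_1)$ between $\operatorname{GKdim}(A_1)=2$ and $\operatorname{GKdim}(\operatorname{Quot}(A_1))=2$, which settles $S=V$ as a special case. The main obstacle is the computation $\operatorname{GKdim}(K(x^p,\partial^p))=2$: this is the one non-formal step, relying on the Borho–Kraft-type result for GK dimension of finitely generated field extensions. Everything else is a formal consequence of the finite-over-the-center structure, which is exactly what makes all these localizations so well-behaved in positive characteristic, in sharp contrast with the characteristic zero case of the previous subsection.
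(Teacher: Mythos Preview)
Your argument is correct, but it proceeds quite differently from the paper's. Both proofs hinge on the center $Z=K[x^p,\partial^p]$ and ultimately identify $\operatorname{Quot}(A_1)$ with the central localization $(Z\setminus\{0\})^{-1}A_1$, but they reach this identification by different means. You use the structural fact that $A_1$ is free of rank $p^2$ over $Z$, so that $(Z\setminus\{0\})^{-1}A_1$ is a finite-dimensional domain over the field $\operatorname{Quot}(Z)$ and hence already a division ring; the paper instead stays within the $\operatorname{LSat}$ framework and shows $\operatorname{LSat}(Z\setminus\{0\})=A_1\setminus\{0\}$ by proving that every non-zero left ideal of $A_1$ meets $Z$, via a GK-dimension inequality (Lemma~8.5 and Corollary~8.6 of \cite{krause_lenagan}). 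For the final numerical value, the paper appeals to invariance of GK dimension under central localization (Proposition~4.2 of \cite{krause_lenagan}) applied directly to $A_1$, whereas you pass through the finite-module principle and then invoke the Borho--Kraft-type result $\operatorname{GKdim}_K(K(x^p,\partial^p))=\operatorname{trdeg}_K K(x^p,\partial^p)=2$. For part~(a), the paper gives an independent direct argument using $\operatorname{LSat}(V)=\operatorname{LSat}([x^p,\partial^p])$ and the same central-localization invariance, advertising this as an illustration of the $\operatorname{LSat}$ technique; your sandwich argument from~(b) is shorter but forfeits that illustrative value. In summary: your PI-theoretic route is self-contained and arguably more standard, while the paper's route is chosen to showcase the saturation machinery developed earlier in the text.
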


\begin{proof}
Though the first claim follows from the second, we give a direct proof of it, which illustrates an important technique.
Let $T:=[x^p,\partial^p]$, then clearly $\operatorname{LSat}(V)=\operatorname{LSat}([x,\partial])=\operatorname{LSat}([x^p,\partial^p])=\operatorname{LSat}(T)$.
Since $T$ is contained in the center of $A_1$ it is also a left Ore set in $A_1$, thus $V^{-1}A_1\cong T^{-1}A_1$ by \Cref{LSat_equality_implies_localizations_isomorphic}.
By Proposition 4.2 in \cite{krause_lenagan} the Gelfand-Kirillov dimension does not change when passing from $A_1$ to a central localization of $A_1$ like $T^{-1}A_1$, which implies
\[
	\operatorname{GKdim}(V^{-1}A_1)
	=\operatorname{GKdim}(T^{-1}A_1)
	=\operatorname{GKdim}(A_1)
	=2.
\]
Now we proceed with the second claim.
Consider $Z=K[x^p,\partial^p]$, the center of $A_1(K)$, then $\operatorname{GKdim}(Z)=\operatorname{Krdim}(Z)=2$ by Corollary 4.4  in \cite{krause_lenagan} since $Z$ is a commutative domain.
Moreover, the latter also implies that $Z\setminus\{0\}$ is an Ore set in $A_1(K)$.
We claim that $(Z\setminus\{0\})^{-1}A_1 = (A_1\setminus\{0\})^{-1}A_1 = \operatorname{Quot}(A_1)$, 
in other words, $\operatorname{LSat}(Z\setminus\{0\}) = A_1\setminus\{0\}$.
It is enough to show that any left ideal $\{0\} \neq L \subset A_1$ has a non-zero intersection with $Z$.
Suppose that there is $L$ such that $L \cap Z = \{0\}$.
By Lemma 8.5 in \cite{krause_lenagan} it follows that then $\operatorname{GKdim}(A_1/L) \geq \operatorname{GKdim}(Z)$ holds.
Thus $2 = \operatorname{GKdim}(A_1) \geq \operatorname{GKdim}(A_1/L) \geq 2$ and therefore by Corollary 8.6  in \cite{krause_lenagan} $L=\{0\}$ follows. 
\end{proof}

\section{Computing the intersection of a left ideal and a left Ore set}\label{section_intersection}

This section provides the theory and algorithms to compute a representation of the intersection of a left ideal $I$ with a left Ore set $S$, where $S$ belongs to one of the three types stated above, within the setting of a $G$-algebra $A$ over a field $K$.
To avoid rather trivial cases we will assume that $I$ is neither the zero ideal nor the whole algebra, i.e. $\{0\}\subsetneq I\subsetneq A$.

\subsection{Monoidal localizations}

Monoidal localization allows us to adjoin inverses of certain elements, which for example describes the transition from the polynomial ring $K[x]$ to the Laurent polynomial ring $K[x,x^{-1}]$.

For now, let $S=[g]$ for some $g\in A\setminus K$.
To the best of our knowledge it is not possible in general to decide whether $I\cap S$ is empty, but in some cases we can give a positive answer:

\begin{remark}\label{intersection_with_monoid_algebra}
	Since $A$ is a domain so is $K[S]=K[g]\subseteq A$, the $K$-monoid algebra of $S$.
	Assume that we are able to compute $L:=I\cap K[S]$\footnote{See \cite{LVint} for conditions and further details.}.
	If $L=\{0\}$, then in particular $I\cap S=\emptyset$ since $I\cap S=L\cap S$.
\end{remark}

In the following we will assume $I\cap S\neq\emptyset$.
For our purposes this is not a restriction since we are mostly interested in the case where $I=\ker(\varphi_{s,r})$ for some $s\in S$ and $r\in R$.
Then $I\cap S\neq\emptyset$ follows from \Cref{Ore_condition_iff_all_kernels_intersect}.

If $I\cap S$ is non-empty it has the structure of a principal monoid ideal:

\begin{lemma}
	Let $g\in A\setminus K$.
	If $I\cap[g]\neq\emptyset$, then there exists $m\in\mathbb{N}$ such that $I\cap[g]=g^m\cdot[g]$.
\end{lemma}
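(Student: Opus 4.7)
The plan is to exploit the total order that powers of a single element enjoy. Since $[g] = \{g^k : k \in \mathbb{N}_0\}$ is well-ordered by the exponent, the subset $I \cap [g]$, which is non-empty by hypothesis, has a least element. So I would begin by defining
\[
    m := \min\{k \in \mathbb{N}_0 : g^k \in I\},
\]
which exists by well-ordering of $\mathbb{N}_0$.

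Next I would verify $m \geq 1$, so that $m \in \mathbb{N}$ as claimed. This follows from the standing assumption $I \subsetneq A$: if we had $m = 0$, then $1 = g^0 \in I$, forcing $I = A$, a contradiction.

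With $m$ in hand, I would prove both inclusions of $I \cap [g] = g^m \cdot [g]$. For the inclusion $g^m \cdot [g] \subseteq I \cap [g]$, any element is of the form $g^m \cdot g^k = g^{m+k}$ for some $k \in \mathbb{N}_0$, which lies in $[g]$ trivially, and lies in $I$ because $I$ is a left ideal containing $g^m$ and $g^k \cdot g^m = g^{m+k}$. For the reverse inclusion, any element of $I \cap [g]$ is of the form $g^n$ with $g^n \in I$; by minimality of $m$ we must have $n \geq m$, so $g^n = g^m \cdot g^{n-m} \in g^m \cdot [g]$.

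There is really no substantial obstacle here — the argument is a direct application of well-ordering combined with the left ideal property — the only point meriting care is ensuring $m \geq 1$, which is why the excerpt made the blanket assumption $I \subsetneq A$ at the start of the section.
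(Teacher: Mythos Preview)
Your proposal is correct and matches the paper's own proof essentially line for line: both take the minimal exponent $m$ with $g^m\in I$, use the left ideal property to get $g^j\in I$ for all $j\geq m$, and conclude $I\cap[g]=g^m\cdot[g]$. Your additional remark that $m\geq 1$ follows from the standing assumption $I\subsetneq A$ is a helpful clarification that the paper leaves implicit.
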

\begin{proof}
	If $I\cap[g]\neq\emptyset$, then there exists a minimal $m\in\mathbb{N}$ such that $g^m\in I$ and $g^k\notin I$ for all $k<m$.
	Now $g^j=g^{j-m}\cdot g^m\in I$ for any $j\geq m$, thus $I\cap[g]=g^m\cdot[g]$.
\end{proof}

The natural thing to do is to iterate over the natural numbers to find the smallest $m$ among them such that $g^m\in I$.
This membership test can be done by computing normal forms in the Gr\"obner sense: in the algorithm \textsc{MonoidalIntersection}, $\operatorname{NF}(g^m|F)$ denotes the normal form of $g^m$ with respect to $F$.

To avoid unnecessary expensive normal form computations we use another fact from Gr\"obner basis theory: the leading monomial of any element of $I$ must be divisible by the leading monomial of an element of a Gr\"obner basis of $I$.
Since we know that at least one power of $g$ is contained in $I$, we find the minimal $m$ such that $\operatorname{lm}(g^m)$ is divisible by the leading monomial of any basis element, which can be done by comparing the leading exponents.

\begin{algorithm}
	\KwIn{Gr\"obner basis $F=\{f_1,\ldots,f_k\}$ of $I$ and $g\in R\setminus K$ with $I\cap[g]\neq\emptyset$.}
	\KwOut{$m\in\mathbb{N}$ such that $I\cap[g]=g^m\cdot[g]$.}
	\Begin{
		\ForEach{$1\leq i\leq k$}{
			$m_i:=\min\{k\in(\mathbb{N}\cup\{\infty\}):\operatorname{lm}(f_i)|\operatorname{lm}(g)^k\}$\;
		}
		$m:=\min\{m_i\mid1\leq i\leq k\}$\;
		\While{$\operatorname{NF}(g^m|F)\neq0$}{
			$m:=m+1$\;
		}
		\Return{$m$}\;
	}
	\caption{\textsc{MonoidalIntersection}}
\end{algorithm}

While multiplicatively closed sets generated by infinitely many elements are out of scope for computational purposes, we still have to deal with finite sets of generators.
To reduce this to the case of one generator, we generalize the following classical result: let $f_1,\ldots,f_k\in K[\underline{x}]:=K[x_1,\ldots,x_n]$, $S:=[f_1,\ldots,f_k]$ and $T=[f_1\cdot\ldots\cdot f_k]$, then $S^{-1}K[\underline{x}]\cong T^{-1}K[\underline{x}]$.

\begin{lemma}\label{localization_at_product_is_isomorphic_to_localization_at_factors}
	Let $R$ be a domain and $g_1,\ldots,g_k\in R\setminus\{0\}$ such that $g_ig_j=g_jg_i$ for all $i$ and $j$.
	Consider $S=[g_1,\ldots,g_k]$ and $T=[g]$ for $g:=g_1\cdot\ldots\cdot g_k$.
	\begin{enumerate}[(a)]
		\item
			$S$ is a left Ore set in $R$ if and only if $T$ is a left Ore set in $R$.
		\item
			If $S$ and $T$ are left Ore sets, then $S^{-1}R\cong T^{-1}R$.
	\end{enumerate}
\end{lemma}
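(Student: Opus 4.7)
The plan is to show that $S$ and $T$ have identical left saturation closures, which reduces both parts to results already established in the paper. Since the $g_i$ pairwise commute, every element of $S$ can be written as a monomial $s=\prod_{i=1}^k g_i^{n_i}$ with $n_i\in\mathbb{N}_0$, while $T$ consists of the powers $g^m=\prod_{i=1}^k g_i^m$.

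First I would verify the inclusion $T\subseteq S$, which is immediate from the definition. Next, to prove $S\subseteq\operatorname{LSat}(T)$, take $s=\prod g_i^{n_i}\in S$, set $N:=\max_i n_i$, and put $w:=\prod g_i^{N-n_i}\in R$; using commutativity of the $g_i$ one checks $ws=\prod g_i^{N}=g^N\in T$, so $s\in\operatorname{LSat}(T)$. Applying \Cref{S_in_LSat(T)_iff_LSat(S)_in_LSat(T)} yields $\operatorname{LSat}(S)\subseteq\operatorname{LSat}(T)$. The reverse inclusion follows directly from $T\subseteq S\subseteq\operatorname{LSat}(S)$ together with the same lemma (or equivalently from monotonicity of $\operatorname{LSat}$ on subsets). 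Hence $\operatorname{LSat}(S)=\operatorname{LSat}(T)$.

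With this equality in hand, part (a) is immediate from \Cref{S_Ore_iff_LSat(S)_Ore}: $S$ satisfies the left Ore condition iff $\operatorname{LSat}(S)$ does iff $\operatorname{LSat}(T)$ does iff $T$ does. Part (b) then follows from \Cref{LSat_equality_implies_localizations_isomorphic}, giving $S^{-1}R\cong T^{-1}R$.

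The only delicate point is verifying $S\subseteq\operatorname{LSat}(T)$; this is the step where pairwise commutativity of the $g_i$ is essential, since it lets one reshuffle factors so that the chosen multiplier $w$ really does complete $s$ to a pure power of $g$. Without commutativity one would have to worry about the order of factors in $g^N$, and neither $S$ nor $T$ would admit such a clean monomial description. All other steps are routine applications of the previously established machinery on saturation closures.
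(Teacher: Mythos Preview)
Your proof is correct and follows essentially the same strategy as the paper: establish $\operatorname{LSat}(S)=\operatorname{LSat}(T)$ via $T\subseteq S$ and $S\subseteq\operatorname{LSat}(T)$, then invoke \Cref{S_Ore_iff_LSat(S)_Ore} for part~(a) and \Cref{LSat_equality_implies_localizations_isomorphic} (the paper cites the underlying \Cref{localization_at_left_saturation}) for part~(b). The only cosmetic difference is that the paper checks $g_j\in\operatorname{LSat}(T)$ for each generator via $(g_1\cdots\widehat{g_j}\cdots g_k)\,g_j=g$ and then passes to all of $S$, whereas you directly complete an arbitrary monomial $\prod g_i^{n_i}$ to $g^N$; your version is arguably cleaner since it does not implicitly rely on $\operatorname{LSat}(T)$ being multiplicatively closed.
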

\begin{proof}
	By construction, $S$ and $T$ are multiplicatively closed sets such that $T\subseteq S$.
	Since the $g_i$ commute we have
	\[
		(g_1\cdot\ldots\cdot g_{j-1}\cdot g_{j+1}\cdot\ldots\cdot g_k)g_j=g\in T
	\]
	which implies $g_j\in\operatorname{LSat}(T)$ and thus $S\subseteq\operatorname{LSat}(T)$, since the $g_j$ generate $S$ as a monoid.
	Together with $T\subseteq S\subseteq\operatorname{LSat}(S)$ we get $\operatorname{LSat}(S)=\operatorname{LSat}(T)$ by applying Lemma \ref{S_in_LSat(T)_iff_LSat(S)_in_LSat(T)} twice.
	Now the first part follows from Lemma \ref{S_Ore_iff_LSat(S)_Ore}, the second from Proposition \ref{localization_at_left_saturation}.
\end{proof}

\subsection{Geometric localizations of Weyl algebras}

Let $n\in\mathbb{N}$ and $\mathfrak{p}$ be a prime ideal in $R:=K[x_1,\ldots,x_n]\subsetneq A_n$.
Then $R\setminus\mathfrak{p}$ is a left Ore set in $A_n$ and we can consider the geometric localization $(R\setminus\mathfrak{p})^{-1}A_n$.
The most common occurrence of this localization is the special case where we replace $\mathfrak{p}$ by the maximal ideal $\mathfrak{m}_p$ in $R$ corresponding to a point $p\in K^n$.
The result is the so-called \emph{local (algebraic) Weyl algebra} $A_{n,p}:=(R\setminus\mathfrak{m}_p)^{-1}A_n$, which is important in $D$-module theory.


The main theoretical result in this paragraph is that the Weyl algebras contain a multitude of left Ore sets.
To prove this we first need some technical results.
Note that due to the relations in $A_n$ we have $f\partial_j=\partial_jf+\frac{\partial f}{\partial x_j}$ for all $f\in R$.

\begin{lemma}
	Let $f\in R$ and $j\in\{1,\ldots,n\}$.
	For all $i\in\mathbb{N}_0$ we have
	\[
		f^{i+1}\partial_j
		=\left(\partial_jf-(i+1)\frac{\partial f}{\partial x_j}\right)f^i.
	\]
\end{lemma}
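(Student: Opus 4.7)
The approach is a plain induction on $i\in\mathbb{N}_0$, where the essential ingredient is the Weyl-algebra commutation rule $f\partial_j = \partial_j f - \frac{\partial f}{\partial x_j}$ (the correct form of the identity announced in the preceding remark — the sign is forced by $\partial_j x_j = x_j\partial_j+1$ and is precisely the $i=0$ instance of the lemma). The base case $i=0$ therefore requires nothing beyond citing this identity.

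For the inductive step, suppose the claim holds for some $i\geq 0$. I would begin by writing $f^{i+2}\partial_j = f\cdot\bigl(f^{i+1}\partial_j\bigr)$ and substituting the induction hypothesis to obtain $f\bigl(\partial_j f-(i+1)\tfrac{\partial f}{\partial x_j}\bigr)f^i$. Since $\tfrac{\partial f}{\partial x_j}$ lies in the commutative subring $R$, it commutes with every power of $f$, so the second summand becomes $-(i+1)\tfrac{\partial f}{\partial x_j}f^{i+1}$ immediately. For the first summand I would apply the commutation rule once more to the subword $f\partial_j$, producing $\partial_j f\cdot f^{i+1} - \tfrac{\partial f}{\partial x_j}\,f^{i+1}$. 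Combining the two copies of $\tfrac{\partial f}{\partial x_j}\,f^{i+1}$ gives coefficient $-(i+2)$, and collecting yields $\bigl(\partial_j f-(i+2)\tfrac{\partial f}{\partial x_j}\bigr)f^{i+1}$, which is exactly the claim for $i+1$.

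I do not anticipate any real obstacle: once the commutation rule is invoked with the correct sign, the induction is pure bookkeeping, relying only on associativity in $A_n$ and commutativity inside the polynomial subring $R$. The only micro-subtlety is the sign that appears to be misprinted in the remark immediately before the lemma; the $i=0$ case of the lemma itself serves as the unambiguous cross-check for the convention to be used.
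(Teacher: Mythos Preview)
Your proof is correct and follows essentially the same route as the paper: induction on $i$, with the base case being the commutation rule $f\partial_j=\partial_j f-\tfrac{\partial f}{\partial x_j}$ and the inductive step handled by pulling one copy of $f$ through $\partial_j$ and collecting the two $\tfrac{\partial f}{\partial x_j}$ terms. Your observation that the sign in the preceding remark is a misprint (the correct identity being $f\partial_j=\partial_j f-\tfrac{\partial f}{\partial x_j}$, as dictated by $\partial_j x_j=x_j\partial_j+1$) is also accurate; the paper's own proof silently uses the correct sign.
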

\begin{proof}
	Induction on $i\in\mathbb{N}_0$:
			let $i=0$, then $f^1\partial_j=f\partial_j=\partial_jf-\frac{\partial f}{\partial x_j}=\left(\partial_jf-1\cdot\frac{\partial f}{\partial x_j}\right)f^0$.
			Assume that the claim holds for $i\in\mathbb{N}_0$, then we have
			\[\begin{split}
				f^{i+2}\partial_j
				&=ff^{i+1}\partial_j
				=f\left(\partial_jf-(i+1)\frac{\partial f}{\partial x_j}\right)f^i
				=\left(f\partial_jf-(i+1)f\frac{\partial f}{\partial x_j}\right)f^i\\
				&=\left(f\partial_j-(i+1)\frac{\partial f}{\partial x_j}\right)f^{i+1}
				\left(\partial_jf-\frac{\partial f}{\partial x_j}-(i+1)\frac{\partial f}{\partial x_j}\right)f^{i+1}\\
				&=\left(\partial_jf-(i+2)\frac{\partial f}{\partial x_j}\right)f^{i+1}.
			\end{split}\]
\end{proof}

\begin{lemma}\label{Weyl_monoidal_Ore_sets_prelim}
	Let $f\in R$, $i\in\mathbb{N}_0$ and $\beta\in\mathbb{N}_0^n$ such that $i+1\geq\abs{\beta}$.
	Then there exists $v_{i+1,\beta}\in A_n$ such that
	\begin{enumerate}[(i)]
		\item
			$\operatorname{tdeg}_\partial(v_{i+1,\beta})<\abs{\beta}$,
		\item
			$v_{i+1,\beta}$ only contains partial derivatives of $f$ of the form $\frac{\partial^{\abs{\alpha}}f}{\partial x^\alpha}$, where $\beta-\alpha\in\mathbb{N}_0^n$ and
		\item
			$f^{i+1}\partial^\beta=(\partial^\beta f^{\abs{\beta}}+v_{i+1,\beta})f^{i+1-\abs{\beta}}$.
	\end{enumerate}
\end{lemma}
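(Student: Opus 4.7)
The natural approach is induction on $\abs{\beta}$, with the preceding lemma providing the key single-variable commutation.

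The base case $\abs{\beta}=0$ is trivial: $\partial^\beta = 1$ and $v_{i+1,0}:=0$ satisfies (i)--(iii) vacuously. For the inductive step, pick any $j$ with $\beta_j>0$ and write $\beta = \gamma + e_j$, so $\abs{\gamma} = \abs{\beta}-1$. Since $i+1 \geq \abs{\beta} > \abs{\gamma}$, the induction hypothesis applied to $\gamma$ yields
\[
f^{i+1}\partial^\beta = (\partial^\gamma f^{\abs{\gamma}} + v_{i+1,\gamma})\,f^{i+1-\abs{\gamma}}\partial_j.
\]
The previous lemma, applied with exponent $i-\abs{\gamma} \geq 0$, rewrites $f^{i+1-\abs{\gamma}}\partial_j$ as $\bigl(\partial_j f - (i+1-\abs{\gamma})\tfrac{\partial f}{\partial x_j}\bigr)f^{i+1-\abs{\beta}}$. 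Distributing and using the identity $f^{\abs{\gamma}}\partial_j f = \partial_j f^{\abs{\beta}} - \abs{\gamma}f^{\abs{\gamma}}\tfrac{\partial f}{\partial x_j}$ (an immediate consequence of iterating the Weyl relation $\partial_j f = f\partial_j + \tfrac{\partial f}{\partial x_j}$ and exploiting that $R$ is commutative), I would isolate the leading term $\partial^\beta f^{\abs{\beta}}$ and collect everything else into a single right factor of $f^{i+1-\abs{\beta}}$; the coefficient of this factor, minus $\partial^\beta f^{\abs{\beta}}$, is the desired $v_{i+1,\beta}$.

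Verification of (i)--(iii) is then by inspection of the handful of summands that remain. For (i), each has $\operatorname{tdeg}_\partial$ at most $\abs{\gamma}=\abs{\beta}-1$: the summand containing $\partial^\gamma$ with polynomial coefficients contributes at most $\abs{\gamma}$ directly, while summands of the form $v_{i+1,\gamma}\partial_j f$ or $v_{i+1,\gamma}\tfrac{\partial f}{\partial x_j}$ contribute at most $(\abs{\gamma}-1)+1 = \abs{\gamma}$ by the inductive bound on $v_{i+1,\gamma}$. For (ii), every derivative of $f$ that appears is either inherited from $v_{i+1,\gamma}$ (indexed by some $\alpha \leq \gamma \leq \beta$), equals $\tfrac{\partial f}{\partial x_j}$ (indexed by $e_j \leq \beta$), or arises from a Leibniz-style expansion of $\partial^\gamma f^{\abs{\gamma}}$, which only produces derivatives indexed by some $\alpha \leq \gamma \leq \beta$. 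Property (iii) is the identity just established. The main obstacle is not conceptual but combinatorial: the bookkeeping of $\partial$-degrees and of the multi-indices of derivatives through the expansion, since a naive expansion by Leibniz produces many cross-terms and one must check uniformly that none of them has $\partial$-degree reaching $\abs{\beta}$ or a derivative of $f$ indexed beyond $\beta$.
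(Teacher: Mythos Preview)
Your proposal is correct and follows essentially the same route as the paper: induction on $\abs{\beta}$, peel off one $\partial_j$, apply the induction hypothesis, then invoke the preceding single-variable lemma to move the remaining power of $f$ past $\partial_j$. The only cosmetic difference is in the intermediate bookkeeping: the paper first recombines $\partial^\gamma f^{\abs{\gamma}}\cdot f^{i+1-\abs{\gamma}}$ into $\partial^\gamma f^{i+1}$ and applies the preceding lemma \emph{twice} (once with exponent $i$, once with exponent $i-\abs{\gamma}$), whereas you apply it once to $f^{i+1-\abs{\gamma}}\partial_j$ and handle the leading piece via your auxiliary identity $f^{\abs{\gamma}}\partial_j f=\partial_j f^{\abs{\beta}}-\abs{\gamma}f^{\abs{\gamma}}\tfrac{\partial f}{\partial x_j}$; both routes produce the identical closed form
\[
v_{i+1,\beta}=-(i+1)\,\partial^\gamma\tfrac{\partial f}{\partial x_j}f^{\abs{\gamma}}+v_{i+1,\gamma}\,\partial_j f-(i+1-\abs{\gamma})\,v_{i+1,\gamma}\tfrac{\partial f}{\partial x_j},
\]
from which (i)--(iii) follow by the inspection you outline (the paper simply asserts that $v_{i+1,\beta}$ ``satisfies the conditions above'').
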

\begin{proof}
	Induction on $\abs{\beta}\in\mathbb{N}_0$:
			if $\abs{\beta}=0$, then $\beta=0$.
			Set $v_{i+1,0}:=0$, then
			\[
				f^{i+1}\partial^\beta=f^{i+1}=(\partial^\beta f^{\abs{\beta}}+v_{i+1,0})f^{i+1-\abs{\beta}}.
			\]
			Now let $\beta\in\mathbb{N}_0^n\setminus\{0\}$ and assume the claim holds for all $\alpha\in\mathbb{N}_0^n$ with $\abs{\alpha}<\abs{\beta}$.
			Then $\beta=\alpha+e_j$ for some $j\in\{1,\ldots,n\}$ and $\alpha\in\mathbb{N}_0^n$ with $\abs{\alpha}<\abs{\beta}$.
			Now
			\[\begin{split}
				f^{i+1}\partial^\beta
				&=f^{i+1}\partial^\alpha\partial_j
				(\partial^\alpha f^{\abs{\alpha}}+v_{i+1,\alpha})f^{i+1-\abs{\alpha}}\partial_j
				=\partial^\alpha f^{i+1}\partial_j+v_{i+1,\alpha}f^{i+1-\abs{\alpha}}\partial_j\\
				&=\partial^\alpha\left(\partial_jf-(i+1)\frac{\partial f}{\partial x_j}\right)f^i
					+v_{i+1,\alpha}\left(\partial_jf-(i+1-\abs{\alpha})\frac{\partial f}{\partial x_j}\right)f^{i-\abs{\alpha}}\\
				&=\left(\partial^\alpha\partial_jf^{1+\abs{\alpha}}-(i+1)\partial^\alpha\frac{\partial f}{\partial x_j}f^{\abs{\alpha}}+v_{i+1,\alpha}\partial_jf-(i+1-\abs{\alpha})v_{i+1,\alpha}\frac{\partial f}{\partial x_j}\right)f^{i-\abs{\alpha}}\\
				&=(\partial^\beta f^{\abs{\beta}}+v_{i+1,\beta})f^{i+1-\abs{\beta}},
			\end{split}\]
			where $v_{i+1,\beta}:=-(i+1)\partial^\alpha\frac{\partial f}{\partial x_j}f^{\abs{\alpha}}+v_{i+1,\alpha}\partial_jf-(i+1-\abs{\alpha})v_{i+1,\alpha}\frac{\partial f}{\partial x_j}$ satisfies the conditions above.
\end{proof}

\begin{lemma}\label{Weyl_monoidal_Ore_sets}
	Let $f\in R$, $r\in A_n$, $d:=\operatorname{tdeg}_\partial(r)$ and $k\in\mathbb{N}_0$.
	Then there exist $\tilde{r},\hat{r}\in A_n$ such that
	\[
		f^{d+k}\cdot r
		=\tilde{r}\cdot f^k
		\quad\text{and}\quad
		r\cdot f^{d+k}
		=f^k\cdot\hat{r}.
	\]
\end{lemma}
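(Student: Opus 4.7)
The plan is to reduce to the previous lemma by writing $r$ in the standard PBW basis of $A_n$ and handling the $\partial$-monomials one at a time.

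First I would write $r=\sum_{\beta\in\mathbb{N}_0^n,\,|\beta|\le d} r_\beta\,\partial^\beta$ with $r_\beta\in R=K[x_1,\dots,x_n]$. Since $f\in R$ commutes with every $r_\beta$, I can slide $f^{d+k}$ past the coefficients to get
\[
  f^{d+k}\cdot r
  \;=\;\sum_{|\beta|\le d} r_\beta\,f^{d+k}\partial^\beta.
\]
For each $\beta$ with $|\beta|\le d$, the hypothesis $i+1=d+k\ge|\beta|$ of \Cref{Weyl_monoidal_Ore_sets_prelim} is satisfied, so that lemma yields
\[
  f^{d+k}\partial^\beta
  \;=\;\bigl(\partial^\beta f^{|\beta|}+v_{d+k,\beta}\bigr)f^{d+k-|\beta|}
  \;=\;\bigl(\partial^\beta f^{|\beta|}+v_{d+k,\beta}\bigr)f^{d-|\beta|}\,f^k,
\]
where the last equality uses $d+k-|\beta|\ge k$, which in turn is exactly the assumption $|\beta|\le d$. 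Setting
\[
  \tilde r:=\sum_{|\beta|\le d} r_\beta\bigl(\partial^\beta f^{|\beta|}+v_{d+k,\beta}\bigr)f^{d-|\beta|}\;\in\;A_n
\]
gives the first identity $f^{d+k}\cdot r=\tilde r\cdot f^k$.

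For the second identity I would invoke the anti-automorphism $\tau\colon A_n\to A_n$ fixing $R$ pointwise and sending $\partial_j\mapsto-\partial_j$; it is well-defined since the defining relations $\partial_j x_j=x_j\partial_j+1$ are preserved, and it obviously preserves the total $\partial$-degree. Applying $\tau$ to the sought equation $r\cdot f^{d+k}=f^k\cdot\hat r$ transforms it into
\[
  f^{d+k}\cdot\tau(r)\;=\;\tau(\hat r)\cdot f^k,
\]
which, since $\operatorname{tdeg}_\partial(\tau(r))=d$, is exactly the first identity applied to $\tau(r)$. Hence the first part produces some $\tilde r'\in A_n$ with $f^{d+k}\tau(r)=\tilde r' f^k$, and then $\hat r:=\tau(\tilde r')$ satisfies the required equation.

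The main obstacle is really just the degree bookkeeping: one has to verify that the exponent $d+k-|\beta|$ arising from \Cref{Weyl_monoidal_Ore_sets_prelim} is at least $k$ so that $f^k$ can be cleanly factored on the right, which is precisely guaranteed by the choice to multiply by $f^{d+k}$ rather than a smaller power. Everything else is a formal reorganisation of a PBW expansion.
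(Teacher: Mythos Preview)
Your argument for the first identity is exactly the paper's: expand $r$ in the PBW basis, commute $f^{d+k}$ past the polynomial coefficients, and invoke \Cref{Weyl_monoidal_Ore_sets_prelim} termwise to peel off $f^k$ on the right.

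For the second identity the paper simply says ``analogously using a right-sided version of \Cref{Weyl_monoidal_Ore_sets_prelim}'', i.e.\ it would redo the whole inductive lemma with everything mirrored. Your use of the formal-adjoint anti-involution $\tau$ (fixing $R$, sending $\partial_j\mapsto-\partial_j$) is a genuinely cleaner shortcut: it converts the right-hand statement into the already-proved left-hand one without rebuilding any machinery. The trade-off is that the paper's route would give an explicit formula for $\hat r$ parallel to the one for $\tilde r$, whereas yours produces $\hat r=\tau(\tilde r')$ only implicitly; for the purposes of this lemma (mere existence) your approach is preferable. One small point worth making explicit is that $\tau$ is an involution, so that $\hat r:=\tau(\tilde r')$ really does satisfy $\tau(\hat r)=\tilde r'$ and hence, applying $\tau$ once more, $r\,f^{d+k}=f^k\hat r$.
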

\begin{proof}
	Let $r=\sum_{\beta\in\mathbb{N}_0^n}^{}b_\beta\partial^\beta$, where $b_\beta\in R$.
	Since $d+k\geq\abs{\beta}$ for all $\beta$ such that $b_\beta\neq0$, by \Cref{Weyl_monoidal_Ore_sets_prelim} there exist $v_{d+k,\beta}\in A_n$ such that
	\[
		f^{d+k}\partial^\beta
		=(\partial^\beta f^{\abs{\beta}}+v_{d+k,\beta})f^{d+k-\abs{\beta}}
		=(\partial^\beta f^{\abs{\beta}}+v_{d+k,\beta})f^{d-\abs{\beta}}f^k.
	\]
	Define
	\[
		\tilde{r}
		:=\sum_{\beta\in\mathbb{N}_0^n}^{}b_\beta(\partial^\beta f^{\abs{\beta}}+v_{d+k,\beta})f^{d-\abs{\beta}},
	\]
	then
	\[
		f^{d+k}\cdot r
		=f^{d+k}\cdot\sum_{\beta\in\mathbb{N}_0^n}^{}b_\beta\partial^\beta
		=\sum_{\beta\in\mathbb{N}_0^n}^{}b_\beta f^{d+k}\partial^\beta
		=\sum_{\beta\in\mathbb{N}_0^n}^{}b_\beta(\partial^\beta f^{\abs{\beta}}+v_{d+k,\beta})f^{d-\abs{\beta}}f^k
		=\tilde{r}\cdot f^k.
	\]
	The other statement can be shown analogously using a right-sided version of \Cref{Weyl_monoidal_Ore_sets_prelim}.
\end{proof}

%

\begin{proposition}\label{Weyl_Ore_sets}
	Let $S$ be a multiplicatively closed set in $R=K[x_1,\ldots,x_n]$ and $T$ a multiplicatively closed set in $K[\partial_1,\ldots,\partial_n]$.
	Then $S$ and $T$ are left and right Ore sets in $A_n$.
\end{proposition}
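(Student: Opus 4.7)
The plan is to deduce the whole statement from \Cref{Weyl_monoidal_Ore_sets} together with a standard symmetry of $A_n$ that swaps $x_i$ and $\partial_i$.

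First I would handle $S\subseteq R$. Let $s\in S$ and $r\in A_n$, and set $d:=\operatorname{tdeg}_\partial(r)$. Applying \Cref{Weyl_monoidal_Ore_sets} to $f:=s$ with $k:=1$ produces some $\tilde{r}\in A_n$ with $s^{d+1}r=\tilde{r}s$. Since $S$ is multiplicatively closed, $\tilde{s}:=s^{d+1}\in S$, which is the left Ore condition for the pair $(s,r)$. The right Ore condition is completely analogous: the $\hat{r}$-part of the same lemma gives $r\cdot s^{d+1}=s\cdot\hat{r}$, so again $\tilde{s}=s^{d+1}\in S$ works. Since $s\in S$ and $r\in A_n$ were arbitrary, $S$ is a left and right Ore set in $A_n$.

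For $T\subseteq K[\partial_1,\ldots,\partial_n]$, the plan is to use the Fourier-type symmetry. Define the $K$-algebra map $\psi:A_n\to A_n$ on generators by $\psi(x_i):=\partial_i$ and $\psi(\partial_i):=-x_i$. A short check shows that the Weyl relation is preserved,
\[
    \psi(\partial_i)\psi(x_i)-\psi(x_i)\psi(\partial_i)
    =(-x_i)\partial_i-\partial_i(-x_i)
    =1,
\]
so $\psi$ extends to an algebra homomorphism, and the assignment $x_i\mapsto -\partial_i$, $\partial_i\mapsto x_i$ visibly furnishes a two-sided inverse; hence $\psi$ is a $K$-algebra automorphism with $\psi(R)=K[\partial_1,\ldots,\partial_n]$. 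Then $S':=\psi^{-1}(T)\subseteq R$ is multiplicatively closed, so by the previous paragraph $S'$ is a left and right Ore set in $A_n$. Because ring automorphisms preserve both Ore conditions, $T=\psi(S')$ is too.

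The only potential pitfall is the verification that $\psi$ is well-defined and bijective, but this is routine; the real content of the statement has already been packaged inside \Cref{Weyl_monoidal_Ore_sets}. Consequently, the proof is short: apply the lemma with $f=s$ and $k=1$ to get the $S$-half, and transport the $S$-half to the $T$-half via the symmetry $\psi$.
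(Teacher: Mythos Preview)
Your argument is correct. The $S$-half is exactly the paper's proof: apply \Cref{Weyl_monoidal_Ore_sets} with $f=s$, $k=1$ and use that $s^{d+1}\in S$ by multiplicative closedness.

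For $T$, your route differs slightly from the paper's. The paper simply says ``analogous calculations'', which amounts to re-running \Cref{Weyl_monoidal_Ore_sets_prelim} and \Cref{Weyl_monoidal_Ore_sets} with the roles of the $x_i$ and $\partial_i$ interchanged (using $\operatorname{tdeg}_x$ in place of $\operatorname{tdeg}_\partial$). You instead invoke the Fourier automorphism $\psi$ of $A_n$ and transport the already-proved $S$-statement through it. This is a clean shortcut: it avoids duplicating the inductive lemmas and makes explicit the symmetry that the paper's phrase ``analogous'' is tacitly relying on. The only cost is the one-line verification that $\psi$ respects the Weyl relations and is bijective, which you supply.
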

\begin{proof}
	Since $S$ is a multiplicatively closed set in $R$ it is also a multiplicatively closed set in $A_n$.
	Let $r\in A_n$ and $s\in S$.
	By \Cref{Weyl_monoidal_Ore_sets} there exist $\tilde{r},\hat{r}\in A_n$ such that $s^{d+1}\cdot r=\tilde{r}\cdot s$ and $r\cdot s^{d+1}=s\cdot\hat{r}$, where $d:=\operatorname{tdeg}_\partial(r)$.
	Since $S$ is multiplicatively closed we have $s^{d+1}\in S$, therefore $S$ satisfies the left and the right Ore condition in $A_n$, thus $S$ is a left and right Ore set in $A_n$.
	The statement for $T$ follows from analogous calculations.
\end{proof}

This implies that any multiplicatively closed set in $R$ is a left and right Ore set in $A_n$, in particular we have that geometric localization of $A_n$ is possible at the complement of any prime ideal $\mathfrak{p}$ in $R$.
But even in closely related $G$-algebras like the shift algebra this does not need to hold, as the following example demonstrates:

\begin{example}\label{example_geometric_shift}
	Consider the prime ideal $\mathfrak{p}=\langle x+1\rangle$ in $K[x]\subseteq S_1$, then for the pair $(x,s)\in(K[x]\setminus\mathfrak{p})\times S_1$ a simple computation delivers $\ker(\varphi_{x,s})=\langle x+1\rangle=\mathfrak{p}$.
	Therefore $\ker(\varphi_{x,s})\cap S=\emptyset$, so $K[x]\setminus\mathfrak{p}$ is not a left Ore set in $S_1$ by Proposition \ref{Ore_condition_iff_all_kernels_intersect}.
\end{example}

Thus the main application of the geometric type is localizing the $n$-th Weyl algebra $A_n$ at the left Ore set $S:=R\setminus\mathfrak{p}$, where $\mathfrak{p}$ is a prime ideal in $R$.

In contrast to the two other types of localizations, the intersection of $I$ and $S$ has no exploitable additional structure: while it is a multiplicatively closed set without $1$, it need not be finitely generated.

Therefore, in the algorithm \textsc{GeometricIntersection}, we return essentially the intersection of $I$ and $R$, which can be computed via Gr\"obner-driven elimination of variables.

\begin{algorithm}
	\KwIn{A left ideal $I$ of $A_n$, $S$, $R$ and $\mathfrak{p}$ as above.}
	\KwOut{A left ideal $J$ in $R$ such that $I\cap S=J\setminus\mathfrak{p}$.}
	\Begin{
		compute $\tilde{I}:=I\cap R=\langle m_1,\ldots,m_k\rangle$\;
		\ForEach{$1\leq i\leq k$}{
			let $\tilde{m_i}$ be the normal form of $m_i$ with respect to $\mathfrak{p}$\;
		}
		\Return{$J:=\langle\tilde{m_1},\ldots,\tilde{m_k}\rangle$}\;
	}
	\caption{\textsc{GeometricIntersection}}
\end{algorithm}

An element $f\in I\cap R$ is an element of $I\cap S$ if and only if $f\notin\mathfrak{p}$, which can be checked by computing the normal form of $f$ with respect to $\mathfrak{p}$.

\begin{proposition}
	In the situation of the algorithm \textsc{GeometricIntersection}, $I\cap S=\emptyset$ if and only if $\tilde{m_i}=0$ for all $i$.
\end{proposition}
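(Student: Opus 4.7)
The plan is to unwind the definition of $I\cap S$ in terms of $\tilde I$ and $\mathfrak{p}$, then use the standard Gröbner-basis characterization of ideal membership via normal form reduction.

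First I would observe that since $S=R\setminus\mathfrak{p}\subseteq R$, we have
\[
	I\cap S
	=(I\cap R)\cap S
	=\tilde{I}\cap S
	=\tilde{I}\setminus\mathfrak{p}.
\]
Consequently $I\cap S=\emptyset$ is equivalent to $\tilde{I}\subseteq\mathfrak{p}$. This is the main structural reduction: the intersection-with-Ore-set question becomes an ideal-inclusion question inside the commutative polynomial ring $R$.

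Next I would use that $\tilde{I}=\langle m_1,\ldots,m_k\rangle$ as a left ideal in $R$, so $\tilde{I}\subseteq\mathfrak{p}$ if and only if every generator lies in $\mathfrak{p}$, i.e.\ $m_i\in\mathfrak{p}$ for all $i$. Finally, since $\tilde{m_i}$ is by construction the normal form of $m_i$ with respect to (a Gröbner basis of) $\mathfrak{p}$, the standard property of normal forms gives $m_i\in\mathfrak{p}\iff\tilde{m_i}=0$. Chaining these three equivalences together yields the claim.

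The only point requiring any care is the first step: one has to verify that $I\cap R$ really is generated as an $R$-ideal by the $m_i$ that the elimination step produces, rather than merely as an $A_n$-ideal. Since \textsc{GeometricIntersection} invokes Gröbner-driven elimination of the $\partial_j$ from a left Gröbner basis of $I$, the resulting set of polynomials in $R$ is in fact a Gröbner (and hence generating) set of $I\cap R$ as an ideal of the commutative ring $R$; this is the content of the elimination theorem for $G$-algebras cited in \cite{lev_diss}. With this in hand no further obstacle remains, and the equivalence follows essentially by definition chasing.
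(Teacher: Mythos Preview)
Your proof is correct and follows essentially the same route as the paper: both reduce $I\cap S=\emptyset$ to the inclusion $I\cap R\subseteq\mathfrak{p}$ (the paper via the decomposition $I\cap R=(I\cap S)\cup(I\cap\mathfrak{p})$, you via $I\cap S=(I\cap R)\setminus\mathfrak{p}$), and then invoke the normal-form criterion for membership in $\mathfrak{p}$. Your additional remark about the elimination theorem guaranteeing that the $m_i$ generate $I\cap R$ as an $R$-ideal is a worthwhile clarification that the paper leaves implicit.
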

\begin{proof}
	By construction, $\tilde{m_i}=0$ for all $i$ if and only if $m_i\in\mathfrak{p}$ for all $i$, which is equivalent to $I\cap R\subseteq\mathfrak{p}$.
	From
	\[
		I\cap R
		=I\cap((R\setminus\mathfrak{p})\cup\mathfrak{p})
		=I\cap(S\cup\mathfrak{p})
		=(I\cap S)\cup(I\cap\mathfrak{p})
	\]
	we can see that this is equivalent to $I\cap S=\emptyset$, since $I\cap S\subseteq R\setminus\mathfrak{p}$.
\end{proof}

\noindent
Thus, if $I\cap S\neq\emptyset$, a member of this intersection can be found among the non-zero generators of $J$.

\subsection{Rational localizations}

In algebras of linear operators, rational localization provides the formal mechanism of passing from polynomial to rational coefficients, for example from the polynomial Weyl algebra $A_1$ to the first rational Weyl algebra $(K[x]\setminus\{0\})^{-1}A_1$.

To set the scene, let $A$ be generated as a $G$-algebra by the variables $x_1,\ldots,x_n$ and let $V\subseteq\{1,\ldots,n\}$ such that $\{x_i\mid i\in V\}$ generate a subalgebra $B$ of $A$ and $S:=B\setminus\{0\}$ is a left Ore set in $A$.
If we can eliminate the variables $\{x_i\mid i\in\{1,\ldots,n\}\setminus V\}$ with Gr\"obner-driven elimination\footnote{In contrast to the commutative case, this is not always possible, see \cite{LVint, lev_diss}.}, then the algorithm \textsc{RationalIntersection} computes the intersection of $S$ and $I$.

\begin{algorithm}
	\KwIn{A left ideal $I$ of $A$, $B$ as above.}
	\KwOut{The intersection $I\cap S$.}
	\Begin{
		compute $J:=I\cap B$ via elimination\;
		\Return{$J\setminus\{0\}$}\;
	}
	\caption{\textsc{RationalIntersection}}
\end{algorithm}

\section{Further algorithmic aspects}

\subsection{The right side analogon}

While we concentrate mostly on the left-sided version of non-commutative structures, the right-sided analogues of the given definitions and results hold as well, which can also be seen by considering \emph{opposite structures}:

\begin{definition}
	Let $(R,+,\cdot)$ be a ring, then the \emph{opposite ring} of $R$ is $R^{\text{op}}:=(R,+,*)$, where $a*b:=b\cdot a$ for all $a,b\in R$.
\end{definition}

In particular, a right Ore set in $R$ is a left Ore set in $R^{\text{op}}$.
Most algorithms for non-commutative structures in \textsc{Singular:Plural} are only implemented for the left-sided versions, while right-sided computations are carried out in a left-sided setting in the opposite ring.
Note that there are special tools for handling opposite rings and the process of creating opposite objects.

\subsection{The left-right conundrum}

Another classical result in the theory of Ore localization is the following: if a multiplicative subset $S$ of a domain $R$ is both left and right Ore, then the left Ore localization $S^{-1}R$ is isomorphic to the right Ore localization $RS^{-1}$ via
\[
	RS^{-1}\rightarrow S^{-1}R,\quad rs^{-1}\mapsto\tilde{s}^{-1}\tilde{r},
\]
where $\tilde{s}r=\tilde{r}s$.
Given a right fraction $rs^{-1}\in RS^{-1}$, finding a corresponding left fraction $\tilde{s}^{-1}\tilde{r}\in S^{-1}R$ is therefore just another application of the left Ore condition, while computing the inverse image of a left fraction requires the right Ore condition.

\subsection{Basic arithmetic}

If we examine Theorem \ref{construction_of_Ore_localization} closely we can see that addition and multiplication in $S^{-1}R$ only consist of computing one instance of the left Ore condition as well as some basic additions and multiplications in the base ring $R$, which directly gives us algorithms for addition and multiplication.

\subsection{Computing inverses}\label{section_computing_inverse}

Additive inverses are given by $-(s,r)=(s,-r)$, but, as we have seen earlier, multiplicative inverses are immensely more complicated.
Proposition \ref{fraction_invertible_iff_numerator_in_LSat} tells us that a fraction $(s,r)$ is invertible if and only if $r\in\operatorname{LSat}(S)$, thus deciding invertibility of a fraction is not harder than computing $\operatorname{LSat}(S)$.
After Remark \ref{rem:repr} we do not expect $\operatorname{LSat}(S)$ to be presentable in finite terms.
However, in the case of geometric localizations at a prime ideal $\mathfrak{p}$ we are in the fortunate situation that $S$ is already saturated:

\begin{lemma}
	Let $\mathfrak{p}$ be a prime ideal in $R:=K[x_1,\ldots,x_n]$.
	Then both $R\setminus\{0\}$ and $S:=R\setminus\mathfrak{p}$ are saturated in $A_n$.
\end{lemma}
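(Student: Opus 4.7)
The plan is to reduce both saturation claims to the observation that every factorization of a polynomial element inside $A_n$ must itself consist of polynomial factors, and then to use the domain (resp.\ prime) property of $R$ (resp.\ $\mathfrak p$).

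First I would recall that $A_n$ carries the Bernstein (order) filtration $F_k := \{r\in A_n:\operatorname{tdeg}_\partial(r)\le k\}$, whose associated graded ring is the commutative polynomial ring $K[x_1,\dots,x_n,\xi_1,\dots,\xi_n]$, a domain. Taking principal symbols, this yields the crucial additivity
\[
	\operatorname{tdeg}_\partial(a\cdot b)
	=\operatorname{tdeg}_\partial(a)+\operatorname{tdeg}_\partial(b)
	\qquad\text{for all } a,b\in A_n\setminus\{0\}.
\]
Readers who prefer to stay inside the paper can derive the same identity directly from \Cref{Weyl_monoidal_Ore_sets_prelim}: the top-$\partial$-degree term of $r\cdot s$ for $r=\sum b_\beta\partial^\beta$ and $s\in R$ is obtained by the commutative product of leading symbols, and symmetrically for $s\cdot r$; by induction on the number of monomials the degree is additive.

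Once this additivity is in hand, the proof of saturation is short. Suppose $a,b\in A_n$ satisfy $a\cdot b\in R$. Then $0=\operatorname{tdeg}_\partial(ab)=\operatorname{tdeg}_\partial(a)+\operatorname{tdeg}_\partial(b)$, forcing $\operatorname{tdeg}_\partial(a)=\operatorname{tdeg}_\partial(b)=0$, i.e.\ $a,b\in R$. Now I would split into the two cases. If $ab\in R\setminus\{0\}$, then $a,b\in R$ and, since $R$ is a (commutative) domain, neither factor can vanish, so $a,b\in R\setminus\{0\}$; this gives both left and right saturation of $R\setminus\{0\}$ simultaneously. If $ab\in S=R\setminus\mathfrak p$, then again $a,b\in R$, and primality of $\mathfrak p$ yields $a\notin\mathfrak p$ and $b\notin\mathfrak p$, so $a,b\in S$.

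The main obstacle is strictly the degree-additivity step: everything else is a one-line application of the domain/prime property. In the write-up I would therefore either invoke the well-known fact that $\operatorname{gr}^F(A_n)$ is a commutative polynomial ring, or give a short direct argument extracting leading $\partial$-symbols from the commutation relations, which is already implicit in the computations preceding \Cref{Weyl_monoidal_Ore_sets}.
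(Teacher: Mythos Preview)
Your proposal is correct and follows essentially the same approach as the paper: both arguments establish additivity of $\operatorname{tdeg}_\partial$ on nonzero elements, deduce that any $A_n$-factorization of an element of $R$ already lies in $R$, and then invoke the domain property of $R$ (resp.\ primality of $\mathfrak{p}$). The only cosmetic difference is the justification for degree additivity---the paper appeals to a global monomial ordering with $\partial_i>x_j$ in the $G$-algebra framework, while you invoke the Bernstein filtration and its commutative associated graded; these are two standard routes to the same fact.
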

\begin{proof}
	For the first part, consider a global monomial ordering where $\partial_i>x_j$ for all $i,j$.
	Let $a,b\in A_n$ such that $a\cdot b\in R\setminus\{0\}$, then $a$ and $b$ are non-zero, thus $\operatorname{tdeg}_\partial(a)+\operatorname{tdeg}_\partial(b)=\operatorname{tdeg}_\partial(a\cdot b)=0$.
	Therefore both $a$ and $b$ are contained in $R\setminus\{0\}$.
	
	Now let $a\cdot b\in S\subseteq R\setminus\{0\}$, then by the first part we have $a,b\in R\setminus\{0\}$.
	Since $a\in\mathfrak{p}$ and $b\in\mathfrak{p}$ both imply $a\cdot b\in\mathfrak{p}$, we have $a,b\in R\setminus\mathfrak{p}=S$.
\end{proof}

Thus we have that a fraction $(s,r)$ in a geometric localization of $A_n$ at $\mathfrak{p}$ is invertible if and only if $r\in K[x_1,\ldots,x_n]\setminus\mathfrak{p}$; the latter condition can be checked algorithmically with commutative Gr\"obner methods.

Unfortunately, $S$ will not be saturated in general when we consider the other localization types.
A closer look at the definition of left saturation closure yields the following insight:

\begin{lemma}
	Let $S$ be a left Ore set in a domain $R$ and $r\in R$.
	Then $r\in\operatorname{LSat}(S)$ if and only if $Rr\cap S\neq\emptyset$.
\end{lemma}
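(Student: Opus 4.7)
The plan is to observe that this lemma is essentially a reformulation of the definition of $\operatorname{LSat}(S)$, with the only substantive content being the identification of the set $\{wr : w\in R\}$ with the principal left ideal $Rr$.

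First I would unwind the ``only if'' direction: assume $r\in\operatorname{LSat}(S)$, so by definition there exists some $w\in R$ with $wr\in S$. Since $wr\in Rr$ as well, the element $wr$ witnesses that $Rr\cap S\neq\emptyset$. For the ``if'' direction, assume $Rr\cap S\neq\emptyset$ and pick any $x$ in the intersection. Because $x\in Rr$, there is some $w\in R$ with $x=wr$, and because $x\in S$ we have $wr\in S$, which is exactly the defining condition for $r\in\operatorname{LSat}(S)$.

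There is no real obstacle here: the left Ore property of $S$ is not even used, nor does the argument require $R$ to be a domain. The only thing to be mindful of is that we are using the convention that $Rr=\{wr\mid w\in R\}$ is the principal left ideal generated by $r$ (which in particular contains $r=1\cdot r$), so that the existential quantifier appearing inside the definition of $\operatorname{LSat}(S)$ translates precisely into nonemptiness of the intersection $Rr\cap S$. The whole proof therefore collapses to a two-line definitional chase.
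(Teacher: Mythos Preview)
Your proof is correct and matches the paper's approach: the paper does not even spell out a proof, introducing the lemma with the phrase ``A closer look at the definition of left saturation closure yields the following insight,'' i.e., treating it as immediate from the definition. Your observation that neither the left Ore property of $S$ nor the domain assumption on $R$ is actually used is also accurate.
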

Therefore we can decide invertibility of a given fraction if we can decide non-emptiness of the intersection of $S$ with a principal left ideal.
For rational localizations this can be checked with the usual Gr\"obner tools, but for monoidal localizations this is still an open problem as stated before.
Some non-units might be identified with the technique described in \Cref{intersection_with_monoid_algebra}.



\subsection{Canceling a fraction}

Given a representation $(s,r)$ of a fraction it is a natural question to ask whether there is a simpler representation $(s',r')$ of the same fraction, for example a representation where the total degree of the denominator $s'$ is smaller than the one of $s$.
Canceling a fraction between other computation steps can have a significant impact on the total computation time.
Given that we are not in a unique factorization domain there may be many different representations that we could call simpler than the initial one, thus it is also of interest to find all simpler representations.

We present two approaches to this problem.
To this end, let $(s,r)\in S\times R$ be a representation of a fraction in a left Ore localization $S^{-1}R$ of a $G$-algebra $R$.
We want to compute (at least)
\[
	C_{s,r}
	:=\{(\hat{s},\hat{r})\in S\times R\mid\exists f\in R:f\hat{s}=s\text{ and }f\hat{r}=r\},
\]
the set of all representations of $(s,r)$ that can be constructed from $(s,r)$ by left canceling.

%
%
%
%

\subsubsection{Syzygy-based canceling}

The first approach is based on computing right and left syzygies (denoted $\operatorname{RSyz}$ resp. $\operatorname{LSyz}$ below), which can be done with Gröbner-driven algorithms.
Let
\[
	M
	:=\operatorname{RSyz}(\begin{bmatrix}s&r\end{bmatrix})=\{\begin{bmatrix}a&b\end{bmatrix}^T\in R^{2\times1}\mid sa+rb=0\}.
\]
Note that $M\neq\{0\}$ since any $G$-algebra is right Noetherian and thus a right Ore domain.
For any $\begin{bmatrix}a&b\end{bmatrix}^T\in M$ let $N_{a,b}:=\operatorname{LSyz}(\begin{bmatrix}a&b\end{bmatrix}^T)=\{\begin{bmatrix}q&p\end{bmatrix}\in R^{1\times2}\mid qa+pb=0\}$. 
We also have $N_{a,b}\neq\{0\}$ since $\begin{bmatrix}s&r\end{bmatrix}\in N_{a,b}$.

\begin{lemma}
	Let $\begin{bmatrix}a&b\end{bmatrix}^T\in M\setminus\{0\}$.
	\begin{enumerate}[(a)]
		\item
			We have $C_{s,r}\subseteq N_{a,b}$.
		\item
			Let $(q,p)\in N_{a,b}$ with $q\in S$.
			Then $(q,p)=(s,r)$ in $S^{-1}R$.
	\end{enumerate}
\end{lemma}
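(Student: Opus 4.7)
The plan for (a) is to unpack the definition of $C_{s,r}$: any $(\hat{s},\hat{r}) \in C_{s,r}$ comes with a witness $f \in R$ such that $s = f\hat{s}$ and $r = f\hat{r}$. Since $s \in S$ we have $s \neq 0$, which forces $f \neq 0$. The defining relation $sa + rb = 0$ of $[a\ b]^T \in M$ then reads $f(\hat{s}a + \hat{r}b) = 0$, and because $R$ is a domain we may cancel $f$ on the left to obtain $\hat{s}a + \hat{r}b = 0$. This is exactly the statement that $[\hat{s}\ \hat{r}] \in N_{a,b}$.

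For (b), the plan is to verify the equivalence relation $\sim$ from Theorem \ref{construction_of_Ore_localization} directly, i.e.\ to produce $\tilde{s} \in S$ and $\tilde{r} \in R$ satisfying both $\tilde{s} q = \tilde{r} s$ and $\tilde{s} p = \tilde{r} r$. The first equation is obtained for free by applying the left Ore condition on $S$ to the pair $(s, q) \in S \times R$. To derive the second equation, the strategy is to right-multiply $\tilde{s} q = \tilde{r} s$ by $a$ and substitute the two syzygy identities $qa = -pb$ and $sa = -rb$; this rewrites both sides in terms of $b$ and yields $\tilde{s} p b = \tilde{r} r b$, i.e.\ $(\tilde{s} p - \tilde{r} r) b = 0$.

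The only real obstacle is ensuring that $b$ is nonzero so that the domain property of $R$ lets us cancel. I would dispose of this as follows: if $b = 0$, then $sa + rb = 0$ collapses to $sa = 0$, and since $s \neq 0$ the domain property forces $a = 0$, contradicting the assumption $[a\ b]^T \in M \setminus \{0\}$. Hence $b \neq 0$, cancellation gives $\tilde{s} p = \tilde{r} r$, and combined with $\tilde{s} q = \tilde{r} s$ this shows $(s, r) \sim (q, p)$, meaning the two fractions agree in $S^{-1}R$.
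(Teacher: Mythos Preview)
Your proof is correct and follows essentially the same route as the paper's own argument: for (a) you cancel the common left factor $f$ via the domain property, and for (b) you invoke the left Ore condition to get $\tilde{s}q=\tilde{r}s$, substitute the two syzygy relations to obtain $\tilde{s}pb=\tilde{r}rb$, and rule out $b=0$ exactly as the paper does. There is no meaningful difference between the two proofs.
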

\begin{proof}
	\begin{enumerate}[(a)]
		\item
			Let $\begin{bmatrix}\hat{s}&\hat{r}\end{bmatrix}\in C_{s,r}$, then there exists $f\in R\setminus\{0\}$ such that $s=f\hat{s}$ and $r=f\hat{r}$.
			Since $\begin{bmatrix}a&b\end{bmatrix}\in M$ we have $0=sa+rb=f\hat{s}a+f\hat{r}b=f(\hat{s}a+\hat{r}b)$, which implies $\hat{s}a+\hat{r}b=0$ and thus $\begin{bmatrix}\hat{s}&\hat{r}\end{bmatrix}\in N_{a,b}$.
		\item
			By the left Ore condition on $S$ there exist $\tilde{s}\in S$ and $\tilde{r}\in R$ such that $\tilde{s}q=\tilde{r}s$.
			Now we have $\tilde{s}pb=-\tilde{s}qa=-\tilde{r}sa=\tilde{r}rb$.
			Since $b=0$ would imply the contradiction $a=0$ we can infer that $\tilde{s}p=\tilde{r}r$, which implies $(q,p)=(s,r)$ in $S^{-1}R$.
	\end{enumerate}
\end{proof}

Thus $\tilde{N}_{a,b}:=N_{a,b}\cap(S\times R)$ is a superset of $C_{s,r}$ consisting of representations of $(s,r)$, which immediately leads to the algorithm \textsc{SyzCancel}.

\begin{algorithm}
	\KwIn{A left fraction $(s,r)\in S^{-1}R$.}
	\KwOut{A set of representations of $(s,r)$ containing $C_{s,r}$.}
	\Begin{
		compute $M:=\operatorname{RSyz}(\begin{bmatrix}s&r\end{bmatrix})=\{\begin{bmatrix}a&b\end{bmatrix}^T\in R^{2\times1}\mid sa+rb=0\}$\; 
		choose any non-zero $\begin{bmatrix}a&b\end{bmatrix}^T\in M$\;
		compute $N:=\operatorname{LSyz}(\begin{bmatrix}a&b\end{bmatrix}^T)=\{\begin{bmatrix}q&p\end{bmatrix}\in R^{1\times2}\mid qa+pb=0\}$\; 
		compute $\tilde{N}:=N\cap(S\times R)$\;
		\Return{$\tilde{N}$}\;
	}
	\caption{\textsc{SyzCancel}}
\end{algorithm}

\subsubsection{Factorization-based canceling}

Since $G$-algebras are finite factorization domains (\cite{BHL-FFD}) there are only finitely many factorizations of the denominator $s$.
Thus we can compute $C_{s,r}$ as follows:

\begin{enumerate}[1.]
	\item
		Set $M:=\emptyset$. 
	\item
		Compute all factorizations of $s\in S \subsetneq R$ of the form $s=f_i s_i$, where $f_i$ is irreducible, $s_i$ a non-unit and $i\in I$, where $I$ is a suitable index set for keeping track of these different factorizations.
		If $I=\emptyset$ return $\{(s,r)\}$.
	\item
		Compute the index set $J:=\{i\in I\mid\exists r_i\in R:r=f_ir_i\}$, where $r_i$ can be obtained by right division: $j\in J$ if and only if the right normal form $\operatorname{rightNF}(r,f_j)=0$.
		This can only be the case if $\operatorname{tdeg}(r)\geq\operatorname{tdeg}(f_j)$.
		If $J=\emptyset$ return $\{(s,r)\}$.
	\item
		For every $j\in J$ apply the procedure recursively to $(s_j,r_j)$ and add the results to $M$.
\end{enumerate}

After finitely many steps we obtain a list of all fully canceled representations of $(s,r)$.
Apart from $(s,r)$ itself they all have denominators with total degree strictly smaller than $\operatorname{tdeg}(s)$, since $\operatorname{tdeg}(s_i)=\operatorname{tdeg}(s)-\operatorname{tdeg}(f_i)\leq\operatorname{tdeg}(s) - 1$.
Still there can be several representatives with minimal total degree of the denominator:

\begin{example}
	Consider again the localization $\operatorname{LSat}(V)^{-1}A_1$ from \Cref{case_study_Weyl}.
	Then $(x^2,x\partial-1)$ and $(x\partial+2,\partial^2)$ represent the same fraction in $\operatorname{LSat}(V)^{-1}A_1$ since
	\[
		(x^2,x\partial-1)
		=(\partial x^2,\partial(x\partial-1))
		=(x(x\partial+2),x\partial^2)
		=(x\partial+2,\partial^2).
	\]
	Both denominators have total degree $2$ and cannot be canceled further.
\end{example}

\section{Implementation}

In this section we outline the structure of \texttt{olga.lib}\footnote{The latest version can be found at \url{www.math.rwth-aachen.de/~Johannes.Hoffmann/singular.html} and will also be included in a later version of \textsc{Singular}.} (short for ``Ore localization in $G$-algebras''), our implementation of the algorithms developed above in the computer algebra system \textsc{Singular:Plural}.

\subsection{Setting, conventions and restrictions}

For now \texttt{olga.lib} can perform computations in the following situations:

\begin{itemize}
	\item
		For monoidal localizations, consider a $G$-algebra $A$ generated by the variables $x_1,\ldots,x_n$ and let $1\leq k\leq n$ such that $R:=K[x_1,\ldots,x_k]$ is a commutative polynomial subring of $A$.
		Further, let $g_1,\ldots,g_t\in R\setminus\{0\}$ such that $S:=[g_1,\ldots,g_t]$ is a left Ore set in $A$.
	\item
		Geometric localizations are only implemented for Weyl-like algebras $A$, consisting of $2n$ variables, where the first $n$ variables $x_1,\ldots,x_n$ generate a commutative polynomial subring $R:=K[x_1,\ldots,x_n]$ of $A$: let $\mathfrak{p}$ be a prime ideal in $R\subsetneq A$ and set $S:=R\setminus\mathfrak{p}$.
	\item
		For rational localizations, consider a $G$-algebra $A$ generated by the variables $x_1,\ldots,x_n$ and let $1\leq i_1<\ldots< i_k\leq n$ such that $x_{i_1},\ldots,x_{i_k}$ generate a sub-$G$-algebra $B$ of $A$ and $S:=B\setminus\{0\}$ is a left Ore set in $A$.
\end{itemize}
In any of these cases we can perform basic arithmetic in the localization at $S$ constructively.

\begin{remark}
	In the monoidal case, the restriction for $g_1,\ldots,g_t$ to be contained in a commutative polynomial ring is due to the existence of a unique factorization into irreducible elements there, which easily allows to check whether a given element $s$ is contained in $S$ or not.
	For the computation of the left Ore condition it suffices if the $g_i$ commute pairwise, see Lemma \ref{localization_at_product_is_isomorphic_to_localization_at_factors}.
	
	All computations will actually be carried out in the localization at $[g]$, where $g$ is the square-free part of $g_1\cdot\ldots\cdot g_t$, which is isomorphic to the localization at $S=[g_1,\ldots,g_t]$ again by Lemma \ref{localization_at_product_is_isomorphic_to_localization_at_factors}.
\end{remark}

\begin{remark}
	In the rational case we also need the existence of an elimination ordering for the variables not indexed by $i_1,\ldots,i_k$ to compute the intersection of $\ker(\varphi_{s,r})$ with the subalgebra $B$.
	This technical condition is satisfied in many applications, especially in the transition from polynomial to rational coefficients in the setting of linear functional operators.
	Total rational localizations, that is, computations in the quotient field of a $G$-algebra, also satisfy this condition since no elimination is required.
\end{remark}

\begin{remark}
	While we know that geometric localization is well-defined at any prime ideal $\mathfrak{p}$ in the Weyl setting, in the other situations we have no automatic way to check if the given input indeed represents a left Ore set in $A$.
	If it is not left Ore, then the behaviour of the algorithms is unspecified: computations may yield a plausible result or fail with an error.
\end{remark}

\begin{remark}
	Computing a left representation from a right representation is based on the computation of a left Ore condition.
	Analogously, computing a right representation from a left representation requires the right Ore property.
	If the respective Ore property does not hold, the corresponding algorithms might fail.
\end{remark}

\subsection{Data structure}

A non-commutative fraction $x$ is represented by a \texttt{vector} $[s,r,p,t]$ with entries of type \texttt{poly}, where $(s,r)=s^{-1}r$ is a representation of $x$ as a left fraction, while $(p,t)=pt^{-1}$ is a representation of $x$ as a right fraction.
If $s=0$ or $t=0$ then the corresponding representation is considered as not yet known. If both are zero, the fraction is not valid.
If both $s$ and $t$ are non-zero, the two representations have to agree, that is, $rt=sp$.
A \texttt{vector} adhering to this specifications will be called a \emph{fraction vector} below.

To interpret $x$ in the context of a localization we have to specify a localization type, which is an \texttt{int} with value $0$ for monoidal, $1$ for geometric and $2$ for rational localization, as well as some localization data which is stored in an object of the universal type \texttt{def} to accommodate the different settings:

\begin{itemize}
	\item
		Monoidal: list $g_1,\ldots,g_t$ with entries of type \texttt{poly}.
	\item
		Geometric: \texttt{ideal} $\mathfrak{p}$ in $K[x_1,\ldots,x_n]$.
	\item
		Rational: \texttt{intvec} containing $i_1,\ldots,i_k$.
\end{itemize}

\subsection{Procedures}

Apart from some auxiliary functions, \texttt{olga.lib} contains the following procedures, which require two parameters specifying a left Ore $S$ set via an \texttt{int locType} and a \texttt{def} \texttt{locData} as described in the section above.
If they are not mentioned explicitly they have to be appended at the end of the parameter lists.

\subsubsection{\texttt{ore(poly s, poly r, int locType, def locData, int rightOre)}}

If \texttt{rightOre} is set to $0$, computes $(\tilde{s},\tilde{r},J)$, where $\tilde{s}\in S$ and $\tilde{r}\in R$ satisfy $\tilde{s}r=\tilde{r}s$ and $J$ is an \texttt{ideal} describing all possible choices for $\tilde{s}$ as specified in Section \ref{section_intersection}.
If \texttt{rightOre} is set to $1$, computes the right-sided analogue.
This procedure will be replaced by two separate functions \texttt{leftOre} and \texttt{rightOre} in future releases.

\subsubsection{\texttt{convertRightToLeftFraction(vector v)}}

Computes a right representation of the left fraction \texttt{v}.

\subsubsection{\texttt{convertLeftToRightFraction(vector)}}

\subsubsection{\texttt{addLeftFractions(vector a, vector b)}}

\subsubsection{\texttt{multiplyLeftFractions(vector a, vector b)}}

\subsubsection{\texttt{areEqualLeftFractions(vector a, vector b)}}

\subsubsection{\texttt{isInS(poly p)}}

Checks if \texttt{p} is contained in $S$.

\subsubsection{\texttt{isInvertibleLeftFraction(vector v)}}

Checks if \texttt{v} is invertible (see Section \ref{section_computing_inverse} for the interpretation of the result).

\subsubsection{\texttt{invertLeftFraction(vector v)}}

Returns the inverse of \texttt{v} if \texttt{v} is invertible according to \texttt{isInvertibleLeftFraction}.

\subsubsection{\texttt{cancelLeftFraction(vector v)}}

Performs steps to find an ``easier'' representation of \texttt{v}.

\subsubsection{\texttt{reduceLeftFraction(vector a, vector b)}}

Only for rational localizations: performs a Gr\"obner-like reduction step to reduce \texttt{a} with respect to \texttt{b}.

\subsection{Examples}

The first example demonstrates a left-to-right conversion in the second rational $q$-shift algebra $A$, which is $\mathbb{Q}(q)(x,y)\langle Q_x,Q_y\mid F\rangle$ with the set of relations (cf. also \Cref{exAlgebrasOfOps})
\[
	F=\{ Q_x g(x,y) = g(qx,y) Q_x, Q_y g(x,y) = g(x,qy) Q_y, Q_y Q_x = Q_x Q_y \;
	\mid g(x,y)\in\mathbb{Q}(q)(x,y)\setminus \mathbb{Q}(q) 
	\}.
\]

\begin{verbatim}
	LIB "olga.lib";
	ring Q = (0,q),(x,y,Qx,Qy),dp; // comm. polynomial ring
	matrix C[4][4] = UpOneMatrix(4); // sets non-comm.
	C[1,3] = q; C[2,4] = q;          // relations
	def ncQ = nc_algebra(C,0); // creates A from Q
	setring ncQ;
	intvec v = 1,2;
	poly f = Qx+Qy; poly g = x^2+1;
	vector frac = [g,f,0,0];
	vector result = convertLeftToRightFraction(frac,2,v);
\end{verbatim}

Now \texttt{result} contains the left representation $(x^2+1)^{-1}(Q_x+Q_y)$ of \texttt{frac} as well as its newly computed right representation $(q^4x^2Q_x+x^2Q_y+q^2Q_y)\cdot(x^4+(q^2+1)x^2+q^2)^{-1}$.
We can convince ourselves that the two representations are equal and that the right denominator of \texttt{result} is contained in $S$:

\begin{verbatim}
	f * result[4] == g * result[3];
	-> 1
	isInS(result[4],2,v);
	-> 1
\end{verbatim}

The second example consists of the addition of two left fractions in various localizations of the second Weyl algebra $A_2 = \mathbb{Q}\langle x,y,\partial_x,\partial_y \mid F \rangle$, where the set of relations $F$ is as in \Cref{exAlgebrasOfOps} :

\begin{verbatim}
	LIB "olga.lib";
	ring R = 0,(x,y,dx,dy),dp; // comm. polynomial ring
	def W = Weyl();  setring W; // creates A_2 from R
	poly g1 = x+3; poly g2 = x*y+y;
	list L = g1,g2;
	frac1 = [g1,dx,0,0]; frac2 = [g2,dy,0,0];
	vector resm = addLeftFractions(frac1,frac2,0,L);
\end{verbatim}

Here, \texttt{resm} has left denominator $x^3y+7x^2y+15xy+9y$ and left numerator $x^2y\partial_x+4xy\partial_x+x^2\partial_y+3y\partial_x+6x\partial_y+9\partial_y$ as a fraction in the monoidal localization of $A_2$ at
$S=[x+3, xy+y]$.

\begin{verbatim}
	ideal p = y-3;
	vector resg = addLeftFractions(frac1,frac2,1,p);
\end{verbatim}

\texttt{resg} contains $(x^2y+4xy+3y)^{-1}(xy\partial_x+y\partial_x+x\partial_y+3\partial_y)$ and belongs to the geometric localization of $A_2$ at
$S=\mathbb{Q}[x,y]\setminus \langle y-3 \rangle$.

\begin{verbatim}
	intvec rat = 2,4;
	frac1 = [y+3,dx,0,0]; frac2 = [dy-1,x,0,0];
	vector resr = addLeftFractions(frac1,frac2,2,rat);
\end{verbatim}

Lastly, \texttt{resr} has left denominator $y\partial_y^2-2y\partial_y+3\partial_y^2+y-4\partial_y+1$ and left numerator $x^2y\partial_y+\partial_x\partial_y^2-xy+3x\partial_y-2\partial_x\partial_y-x+\partial_x$, living in the rational localization of $A_2$ at $S=\mathbb{Q}\langle y, \partial_y \mid \partial_y y = y \partial_y + 1 \rangle\setminus\{0\}$.

Lastly, \texttt{resr} is given by
\[
	(y\partial_y^2-2y\partial_y+3\partial_y^2+y-4\partial_y+1,x^2y\partial_y+\partial_x\partial_y^2-xy+3x\partial_y-2\partial_x\partial_y-x+\partial_x),
\]
it is an element of the rational localization of $A_2$ at $S=\mathbb{Q}\langle y,\partial_y\mid\partial_yy=y\partial_y+1\rangle\setminus\{0\}$, which can be written as $\operatorname{Quot}(\mathbb{Q}\langle y,\partial_y\mid\partial_yy=y\partial_y+1\rangle)\langle x,\partial_x\mid\partial_xx=x\partial_x+1\rangle$, a polynomial Weyl algebra in variables $\{x, \partial_x\}$ over the quotient field of a Weyl algebra in $\{y,\partial_y\}$.

\section{Conclusion and future work}

The algorithmic framework presented here is based on a constructive approach that strives for broad generality.
At a very general level we face the major problem of intersecting a left ideal with a submonoid $S$ of $R$.
We are not aware whether this problem is decidable in general. 
Nevertheless, we have proposed solutions for three application-inspired situations where $S$ has additional structure, but even there some restrictions still apply.
This should not be considered as a failure of the approach, but rather as a hint at the high intrinsic complexity of the problem.

The proposed framework is easily expandable to include other types of left Ore sets $S \subsetneq R$ provided the following two problems can be solved algorithmically:
\begin{enumerate}
\item the submonoid membership problem (i.\,e. whether $r\in S$ for a given $r\in R$),
\item the intersection of a left ideal in $R$ with a submonoid $S$.
\end{enumerate}

Apart from overcoming the current restrictions already mentioned throughout the text, we are working on the following:

The section about the left saturation closure of multiplicatively closed sets is only a special case of a more general notion which also includes the important concept of local closure of submodules, such 
as the celebrated Weyl closure in $D$-module theory.

Utilizing the ability to create user-defined data types introduced in \textsc{Singular} from version $4$ on, we are working on an object-oriented interface for \texttt{olga.lib} to improve usability.
To this end, we also intend to turn \texttt{olga.lib} into a true sandbox environment for all computations associated with Ore localization of $G$-algebras.

\section{Acknowledgements}

The authors are very grateful to Daniel Andres, Vladimir Bavula, Burcin Erocal, Christoph Koutschan and Oleksander Motsak for discussions on the subject, even if some of these have happened a couple of years ago.
Furthermore, we would like to thank the referees for their helpful suggestions.
The second author is grateful to the transregional collaborative research centre 
SFB-TRR 195 ``Symbolic Tools in Mathematics and their Application'' of the German DFG
for partial financial support.

\bibliography{bibliography}

\begin{thebibliography}{10}

\bibitem{OreTools}
Sergei~A. Abramov, Ha~Q. Le, and Ziming Li.
\newblock Oretools: a computer algebra library for univariate {O}re polynomial
  rings.
\newblock Technical report, University of Waterloo, 2003.
\newblock Technical Report CS-2003-12.

\bibitem{AL88}
Joachim Apel and Wolfgang Lassner.
\newblock An extension of {B}uchberger's algorithm and calculations in
  enveloping fields of {L}ie algebras.
\newblock {\em J. Symb. Comp.}, 6(2-3):361--370, 1988.

\bibitem{BHL-FFD}
Jason~P. Bell, Albert Heinle, and Viktor Levandovskyy.
\newblock On noncommutative finite factorization domains.
\newblock {\em Trans. Amer. Math. Soc.}, 369:2675--2695, 2016.

\bibitem{BGV}
Jose Bueso, Jose G{\'o}mez-Torrecillas, and Alain Verschoren.
\newblock {\em Algorithmic methods in non-commutative algebra. Applications to
  quantum groups}.
\newblock Kluwer Academic Publishers, 2003.

\bibitem{Mgfun}
Fr{\'e}d{\'e}ric Chyzak and Bruno Salvy.
\newblock Non--commutative elimination in {O}re algebras proves multivariate
  identities.
\newblock {\em J. Symb. Comp.}, 26(2):187--227, 1998.

\bibitem{GHL15p}
Mark Giesbrecht, Albert Heinle, and Viktor Levandovskyy.
\newblock Factoring linear differential operators in $n$ variables.
\newblock {\em J. Symb. Comp.}, 75:127--148, 2016.

\bibitem{Plural}
Gert-Martin Greuel, Viktor Levandovskyy, Oleksander Motsak, and Hans
  Sch{\"o}nemann.
\newblock {\textsc{Plural}. A \textsc{Singular} {4-1-0} Subsystem for
  Computations with Non-commutative Polynomial Algebras. Centre for Computer
  Algebra, TU Kaiserslautern}, 2016.

\bibitem{GPS08}
Gert-Martin Greuel and Gerhard Pfister.
\newblock {\em A SINGULAR Introduction to Commutative Algebra}.
\newblock Springer, 2nd edition, 2008.

\bibitem{Grig90}
Dmitry Grigor'ev.
\newblock Complexity of factoring and calculating the {GCD} of linear ordinary
  differential operators.
\newblock {\em J. Symb. Comp.}, 10(1):7 -- 37, 1990.

\bibitem{JHL17}
Johannes Hoffmann and Viktor Levandovskyy.
\newblock A constructive approach to arithmetics in {O}re localizations.
\newblock In {\em Proc. {ISSAC}'17}, pages 197--204. ACM Press, 2017.

\bibitem{KW}
Abdelilah Kandri-Rody and Volker Weispfenning.
\newblock Non-commutative gr{\"o}bner bases in algebras of solvable type.
\newblock {\em J. Symb. Comp.}, 9(1):1--26, 1990.

\bibitem{OreAlgebraSage}
Manuel Kauers, Maximilian Jaroschek, and Fredrik Johansson.
\newblock Ore polynomials in {S}age, 2013.

\bibitem{HoloFun}
Christoph Koutschan.
\newblock Holonomic{F}unctions (user's guide).
\newblock Technical report, University of Linz, 2010.
\newblock RISC Report Series No. 10-01.

\bibitem{krause_lenagan}
Günter~R. Krause and Thomas~H. Lenagan.
\newblock {\em Growth of Algebras and Gelfand-Kirillov Dimension}, volume~22 of
  {\em Graduate studies in mathematics}.
\newblock American Mathematical Society, revised edition, 2000.

\bibitem{Kr}
Heinz Kredel.
\newblock {\em Solvable polynomial rings}.
\newblock Shaker, 1993.

\bibitem{Kredel2015}
Heinz Kredel.
\newblock Parametric solvable polynomial rings and applications.
\newblock In Vladimir~P. Gerdt, Wolfram Koepf, Werner~M. Seiler, and Evgenii~V.
  Vorozhtsov, editors, {\em Proc. CASC'15}, pages 275--291, Cham, 2015.
  Springer International Publishing.

\bibitem{JAS}
Heinz Kredel.
\newblock The java algebra system (jas)., since 2000.

\bibitem{lev_diss}
Viktor Levandovskyy.
\newblock {\em Non-commutative Computer algebra for polynomial algebras:
  Gr\"obner bases, applications and implementation}.
\newblock Dissertation, Universit\"at Kaiserslautern, 2005.

\bibitem{LVint}
Viktor Levandovskyy.
\newblock Intersection of ideals with non-commutative subalgebras.
\newblock In J.-G. Dumas, editor, {\em Proc. ISSAC'06}, pages 212--219. ACM
  Press, 2006.

\bibitem{LS}
Viktor Levandovskyy and Hans Sch{\"o}nemann.
\newblock Plural - a computer algebra system for noncommutative polynomial
  algebras.
\newblock In {\em Proc. ISSAC'03}, pages 176--183. ACM Press, 2003.

\bibitem{makar-limanov}
Leonid Makar-Limanov.
\newblock The skew field of fractions of the weyl algebra contains a free
  noncommutative subalgebra.
\newblock {\em Communications in Algebra}, 11(17):2003--2006, 1983.

\bibitem{mcconnell_robson}
John~C. McConnell and J.~Chris Robson.
\newblock {\em Noncommutative Noetherian Rings}, volume~30 of {\em Graduate
  studies in mathematics}.
\newblock American Mathematical Society, 2001.

\bibitem{ore31}
{\O}ystein Ore.
\newblock Linear equations in non-commutative fields.
\newblock {\em Annals of Mathematics}, 32(3):463--477, 1931.

\bibitem{ore33}
{\O}ystein Ore.
\newblock Theory of non-commutative polynomials.
\newblock {\em Annals of Mathematics}, 34(3):480--508, 1933.

\bibitem{vdH16}
Joris van~der Hoeven.
\newblock On the complexity of skew arithmetic.
\newblock {\em Applicable Algebra in Engineering, Communication and Computing},
  27(2):105--122, 2016.

\end{thebibliography}
\bibliographystyle{plain}

\end{document}